\def\N{{\mathbb N}}
\def\Z{{\mathbb Z}}
\def\R{{\mathbb R}}
\def\Tb{{\mathbb T}}
\def\Ac{{\mathcal A}}
\def\Cc{{\mathcal C}}
\def\Hc{{\mathcal H}}
\def\Ps{{\mathscr P}}
\def\Ss{{\mathscr S}}
\def\be{\begin{equation}}
\def\ee{\end{equation}}
\def\bea{\begin{equation*}}
\def\eea{\end{equation*}}
\def\eps{\varepsilon}
\def\half{{\sfrac{1}{2}}}
\def\Pr{{\mathbb P}}
\DeclareMathOperator{\E}{{\mathbb E}}
\DeclareMathOperator{\Var}{Var}
\newtheorem{thm}{Theorem}
\newtheorem{lma}[thm]{Lemma}
\newtheorem{prop}[thm]{Proposition}
\newtheorem*{alg}{Algorithm}
\theoremstyle{remark}
\newtheorem*{remark}{Remark}
\newtheorem{preex}[thm]{Example}
\newtheorem*{keywords}{Keywords}
\newtheorem*{msc}{Mathematics Subject Classification 2010}
\theoremstyle{definition}
\newtheorem*{acknow}{Acknowledgements}
\begin{document}

\title{Partially observed Boolean sequences and noise sensitivity}
\date{}
\author{Daniel Ahlberg\\ \it Mathematical Sciences Research Institute\thanks{MSRI, 17 Gauss Way, Berkeley, CA 94720-5070, United States.}
}
\maketitle

\begin{abstract}
Let $\Hc$ denote a collection of subsets of $\{1,2,\ldots,n\}$, and assign independent random variables uniformly distributed over $[0,1]$ to the $n$ elements. Declare an element $p$-present if its corresponding value is at most $p$. In this paper, we quantify how much the observation of the $r$-present ($r>p$) set of elements affects the probability that the set of $p$-present elements is contained in $\Hc$. In the context of percolation, we find that this question is closely linked to the near-critical regime. As a consequence, we show that for every $r>1/2$, bond percolation on the subgraph of the square lattice given by the set of $r$-present edges is almost surely noise sensitive at criticality, thus generalizing a result due to Benjamini, Kalai and Schramm.

\begin{keywords}
Noise sensitivity, near-critical percolation, discrete Fourier analysis.
\end{keywords}

\begin{msc}
60C05, 60K35, 06E30.
\end{msc}
\end{abstract}

\section{Introduction}

Assign independent random weights, uniformly distributed on $[0,1]$, to the elements of $[n]:=\{1,2,\ldots,n\}$. Declare an element \emph{$p$-present} if its value is at most $p$, and denote the set of $p$-present elements by $\eta_p$. Given $p\in(0,1)$ and $\Hc\subset\{0,1\}^n$, assume that we are interested in the event $\{\eta_p\in\Hc\}$. The problem we address in this paper is whether we already at a larger `scale' $r\in(p,1)$ can tell whether $\{\eta_p\in\Hc\}$ will occur or not. The effect of an observation on a larger scale $r$ can be measured in terms of the variance of $\Pr(\eta_p\in\Hc|\,\eta_r)$. A small variance corresponds to a rather negligible effect of observing $\eta_r$, whereas a variance close to $\Pr(\eta_p\in\Hc)\Pr(\eta_p\not\in\Hc)$ bears witness of a rather decisive effect on the outcome of $\{\eta_p\in\Hc\}$.

Our first result gives an upper bound on $\Var\big(\Pr(\eta_p\in\Hc|\,\eta_r)\big)$, uniform in both $\Hc$ and $n$, and thus limiting the amount by which an observation at a larger scale $r$ may affect the outcome of $\Hc$ at scale $p$. We next investigate the circumstances under which
\be
\Var\big(\Pr(\eta_p\in\Hc|\,\eta_r)\big)\to0\quad\text{as }n\to\infty,
\ee
in which case we say that the observation at scale $r>p$ is asymptotically \emph{clueless} for determining $\Hc$ at scale $p$. Of course, here the property $\Hc$ will have to comprise a sequence $(\Hc_n)_{n\ge1}$ of properties $\Hc_n\subset\{0,1\}^n$. We will show that asymptotic cluelessness corresponds to the concept of noise sensitive, as introduced by Benjamini, Kalai and Schramm~\cite{benkalsch99} (a precise definition is given in Section~\ref{sec:clueless} below).

In their seminal work~\cite{benkalsch99}, the authors studied noise sensitivity in the context of bond percolation on the $\Z^2$ nearest-neighbour lattice. Loosely put, the model was found to be noise sensitive in the following sense: If we pick a critical configuration of bonds, meaning that each bond is independently present with probability $1/2$, and perturb the configuration slightly by independently flipping the state of each bond with low probability, then observing the perturbed configuration gives essentially no information regarding the existence of crossings over large scales in the original configuration.

The work of Benjamini, Kalai and Schramm was later refined by Schramm and Steif~\cite{schste10}, and Garban, Pete and Schramm~\cite{garpetsch10}. We will in this study see how the connection between cluelessness and noise sensitivity can be used to contribute further to the theory. First, we will show that bond percolation on a `typical' subgraph of the $\Z^2$ lattice of density $r>1/2$ is noise sensitive at criticality (see Theorem~\ref{thm:NSx}). Second, we think of $n$ as large but finite, and look for the point, as $r\to p$, the observation of $\eta_r$ contributes to the outcome of $\{\eta_p\in\Hc\}$. For bond percolation on the $\Z^2$ lattice, and for site percolation on the triangular grid, this point coincides with the boundary of the so-called near-critical regime of percolation (see Theorem~\ref{thm:nearcritical}).

Before presenting our results in greater detail, let us say a few words on inspiration and notation. The motivation for this study originates from questions that emerged in a recent study of noise sensitivity in continuum percolation~\cite{Abrogrimor}. The nature of these questions is discrete, and we will address them in the general setting of real-valued functions on the discrete cube $\{0,1\}^n$. A recurring theme will be that of discrete Fourier techniques. We follow the frequent convention that identifies subsets of $[n]$ with elements in $\{0,1\}^n$. We will let $\Pr_p$ denote product measure on $\{0,1\}^n$, which coordinate-wise gives mass $p$ to the value $1$, and let $\E_p$ and $\Var_p$ denote expectation and variance with respect to $\Pr_p$. It will in various instances be convenient to express the pair $(\eta_p,\eta_r)$ as $(\psi\cdot\xi,\psi)$, where the two configurations $\psi$ and $\xi$ in $\{0,1\}^n$ are chosen according to $\Pr_r$ and $\Pr_{p/r}$, respectively. Here and below, `$\cdot$' will 
refer to coordinate-wise multiplication. We will in general 
restrict the discussion to $r>p$, as the opposite case is obtained by a switch of zeros and ones.

\subsection{The combinatorial problem}

Our first result provides a sharp upper bound quantifying the average information given by the observation of $\eta_r$ regarding the outcome of $\{\eta_p\in\Hc\}$. The result is stated more generally in terms of real-valued functions on the discrete cube.

\begin{thm}\label{thm:var2step}
Let $f:\{0,1\}^n\to\R$, and $0<p<r<1$. Then
$$
\Var_r\big(\E_{\sfrac{p}{r}}\big[f(\psi\cdot\xi)\big|\,\psi\big]\big)\,\le\,\frac{p}{r}\,\frac{1-r}{1-p}\,\Var_p(f).
$$
\end{thm}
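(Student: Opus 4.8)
The plan is to diagonalize the relevant averaging operator by means of $p$-biased Fourier analysis. For $q\in(0,1)$ put $\phi^{(q)}(x)=(x-q)/\sqrt{q(1-q)}$ for $x\in\{0,1\}$, so that $\{1,\phi^{(q)}\}$ is orthonormal in $L^2$ of a single $\Pr_q$-coordinate, and for $S\subseteq[n]$ set $\chi^{(q)}_S(x)=\prod_{i\in S}\phi^{(q)}(x_i)$. Then, by independence of coordinates, $\{\chi^{(q)}_S\}_{S\subseteq[n]}$ is an orthonormal basis of $L^2(\Pr_q)$ with $\chi^{(q)}_\emptyset\equiv1$, and hence every $h\in L^2(\Pr_q)$ satisfies $\Var_q(h)=\sum_{S\neq\emptyset}\E_q\big[h\chi^{(q)}_S\big]^2$.

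Next, introduce the linear operator $T$ defined by $(Tf)(\psi)=\E_{\sfrac{p}{r}}\big[f(\psi\cdot\xi)\,\big|\,\psi\big]$. Since $\psi\cdot\xi$ has law $\Pr_p$ when $\psi\sim\Pr_r$ and $\xi\sim\Pr_{\sfrac{p}{r}}$ are independent, $T$ maps $L^2(\Pr_p)$ into $L^2(\Pr_r)$, and the left-hand side of the theorem is exactly $\Var_r(Tf)$. The heart of the matter is the identity
$$T\chi^{(p)}_S=\beta^{|S|}\chi^{(r)}_S,\qquad\text{where }\ \beta:=\sqrt{\tfrac{p}{r}\cdot\tfrac{1-r}{1-p}}\,.$$
Because $(\psi\cdot\xi)_i=\psi_i\xi_i$ and the coordinates are independent, $(T\chi^{(p)}_S)(\psi)=\prod_{i\in S}\E_{\sfrac{p}{r}}\big[\phi^{(p)}((\psi\cdot\xi)_i)\,\big|\,\psi_i\big]$, so the identity reduces to the one-variable statement $\E_{\sfrac{p}{r}}\big[\phi^{(p)}(\psi\xi)\,\big|\,\psi\big]=\beta\,\phi^{(r)}(\psi)$. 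This is checked by evaluating at $\psi=0$ (which forces $\psi\xi=0$) and at $\psi=1$ (where $\E_{\sfrac{p}{r}}[\phi^{(p)}(\xi)]=\tfrac{p}{r}\phi^{(p)}(1)+(1-\tfrac{p}{r})\phi^{(p)}(0)$); after simplifying the normalizing constants, and using $\E[\xi]=p/r$, both cases yield the same constant $\beta$.

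Granting this, expand $f=\sum_S\hat f(S)\chi^{(p)}_S$ with $\hat f(S)=\E_p[f\chi^{(p)}_S]$, so that $\Var_p(f)=\sum_{S\neq\emptyset}\hat f(S)^2$. By linearity $Tf=\sum_S\beta^{|S|}\hat f(S)\chi^{(r)}_S$, and orthonormality of $\{\chi^{(r)}_S\}$ in $L^2(\Pr_r)$ gives
$$\Var_r(Tf)=\sum_{S\neq\emptyset}\beta^{2|S|}\hat f(S)^2\ \le\ \beta^2\sum_{S\neq\emptyset}\hat f(S)^2\ =\ \beta^2\,\Var_p(f),$$
where the inequality uses $0<\beta^2=\tfrac{p}{r}\cdot\tfrac{1-r}{1-p}<1$ (valid since $p<r$), so that $\beta^{2|S|}\le\beta^2$ whenever $|S|\ge1$. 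Since $\beta^2$ is precisely the constant in the statement, this completes the argument.

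The only genuine obstacle is the one-variable computation pinning down the exact value of $\beta$; the rest is bookkeeping with orthonormal bases. It is worth noting that inequality is lost only in the step $\beta^{2|S|}\le\beta^2$ for $|S|\ge2$, so the bound is sharp, attained by any $f$ whose $p$-biased Fourier weight sits on levels $0$ and $1$ (in particular by functions of a single coordinate) — a first indication of the connection with noise sensitivity developed later in the paper.
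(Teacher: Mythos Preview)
Your proof is correct and follows essentially the same route as the paper: the key identity $T\chi^{(p)}_S=\beta^{|S|}\chi^{(r)}_S$ is exactly the content of the paper's Proposition~\ref{prop:hfq}, proved there (as you do) by reducing to a single-coordinate computation, and the variance bound is then read off from~\eqref{eq:FWvar} via~\eqref{eq:var2step}. The only cosmetic difference is that you state the diagonalization as an identity of functions, whereas the paper phrases it as an equality of Fourier coefficients of $h_f$; your sign convention for the basis also differs from the paper's, but this is immaterial.
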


Theorem~\ref{thm:var2step} is sharp, as equality is attained by the dictator function (whose output is the value of the first bit). The upper bound shows that as soon as $r$ is strictly larger than $p$, then the information we retrieve from observing $\eta_r$ is insufficient for determining the outcome of $f(\eta_p)$ decisively, unless $f$ is degenerate. In addition, our proof of Theorem~\ref{thm:var2step} gives the following lower bound, also that sharp and attained by the parity function (encoding whether the number of $1$s is even or not):
$$
\Big(\frac{p}{r}\,\frac{1-r}{1-p}\Big)^n\Var_p(f)\,\le\,
\Var_r\big(\E_{\sfrac{p}{r}}\big[f(\psi\cdot\xi)\big|\,\psi\big]\big).
$$

Theorem~\ref{thm:var2step} improves a result of~\cite[Theorem~1.4]{Abrogrimor}, which under a rather restrictive relation between the parameters $n$, $p$ and $r$ gives an upper bound of order $(p/r)(\log r/p)^2$, as $p/r\to0$. That proof followed a combinatorial approach based on a second moment estimate via an inequality due to Bey~\cite{bey03}. The approach taken here is different, and shows how discrete Fourier analysis can be used to give a concise proof.

\subsection{Asymptotically clueless observations correspond to noise sensitivity}\label{sec:clueless}

We next discuss the possibility that an observation at scale $r>p$ is asymptotically clueless for the outcome $\Hc$ at scale $p$. The two states $\eta_p$ and $\eta_r$ will of course be highly correlated point-wise (at least for $r$ close to $p$), but $\eta_r$ may still carry very little information regarding $\{\eta_p\in\Hc\}$. This turns out to be the case when the outcome is noise sensitive.

The concept of noise sensitivity was introduced in the context of Boolean functions by Benjamini, Kalai and Schramm~\cite{benkalsch99}. Let $(f_n)_{n\ge1}$ be a sequence of functions such that $f_n:\{0,1\}^n\to[0,1]$. Given $\eps\in(0,1)$, let $\omega\in\{0,1\}^n$ be chosen according to $\Pr_p$, and let $\omega^\eps\in\{0,1\}^n$ denote the element obtained by re-sampling each coordinate independently with probability $\eps$, again according to $\Pr_p$. Then, the sequence $(f_n)_{n\ge1}$ is said to be \emph{noise sensitive} at intensity $p$ (NS$_p$ for short) if, for every $\eps>0$,
$$
\E_p\big[f_n(\omega)f_n(\omega^\eps)\big]-\E_p\big[f_n(\omega)\big]^2\to0\quad\text{as }n\to\infty.
$$

The observation that allows us to relate cluelessness to noise sensitivity is that for any $\eps\in(0,1)$ and real-valued function $f$ on $\{0,1\}^n$ we have: For $r=p/(1-\eps(1-p))$,
\be\label{eq:varNSequiv}
\E_p\big[f(\omega)f(\omega^\eps)\big]-\E_p\big[f(\omega)\big]^2\,=\,\Var_r\big(\E_{\sfrac{p}{r}}\big[f(\psi\cdot\xi)\big|\,\psi\big]\big).
\ee
As a corollary we obtain the following characterization of asymptotically clueless observations.

\begin{prop}\label{prop:varNSequiv}
Let $(f_n)_{n\ge1}$ be any sequence of functions $f_n:\{0,1\}^n\to[0,1]$. Then
$$
(f_n)_{n\ge1}\text{ is NS$_p$}\quad\Leftrightarrow\quad\lim_{n\to\infty}\Var_r\big(\E_{\sfrac{p}{r}}\big[f_n(\psi\cdot\xi)\big|\,\psi\big]\big)=0\text{ for every }r\in(p,1).
$$
\end{prop}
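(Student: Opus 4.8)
The plan is to deduce the proposition directly from the identity~\eqref{eq:varNSequiv}, whose proof will be the real content. Observe first that the substitution $r = p/(1-\eps(1-p))$ sets up a bijection between $\eps \in (0,1)$ and $r \in (p,1)$: as $\eps$ ranges over $(0,1)$, the quantity $1 - \eps(1-p)$ ranges over $(p,1)$, and hence $r = p/(1-\eps(1-p))$ ranges over $(p,1)$; this map is a continuous strictly increasing bijection. Therefore, once~\eqref{eq:varNSequiv} is established, the left-hand side $\E_p[f_n(\omega)f_n(\omega^\eps)] - \E_p[f_n(\omega)]^2$ tends to $0$ for every $\eps \in (0,1)$ if and only if the right-hand side $\Var_r(\E_{p/r}[f_n(\psi\cdot\xi)\mid\psi])$ tends to $0$ for every $r \in (p,1)$. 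Since the definition of NS$_p$ only requires the convergence for every $\eps > 0$ (equivalently every $\eps \in (0,1)$, as the $\eps=1$ case is trivial and larger $\eps$ is not meaningful here), this is exactly the claimed equivalence. So the proposition reduces entirely to verifying~\eqref{eq:varNSequiv} for each fixed $n$ and each real-valued $f$ on $\{0,1\}^n$.

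To prove~\eqref{eq:varNSequiv}, I would match up the two probabilistic descriptions of the relevant pairs of configurations. On the right, the pair $(\eta_p, \eta_r) = (\psi\cdot\xi, \psi)$ is built from $\psi \sim \Pr_r$ and an independent $\xi \sim \Pr_{p/r}$; by the conditional-variance decomposition, $\Var_r(\E_{p/r}[f(\psi\cdot\xi)\mid\psi]) = \Var_p(f) - \E_r\big[\Var_{p/r}(f(\psi\cdot\xi)\mid\psi)\big]$, but it is cleaner to expand the variance directly as $\E[f(\psi\cdot\xi)f(\psi\cdot\xi')] - \E[f(\eta_p)]^2$, where $\xi, \xi'$ are two i.i.d.\ copies of $\Pr_{p/r}$ coupled through the same $\psi$. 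The point is then to check that the joint law of $(\psi\cdot\xi, \psi\cdot\xi')$ coincides with the joint law of $(\omega, \omega^\eps)$ from the noise-sensitivity setup. Coordinate-wise, $\omega$ is $1$ with probability $p$; with probability $1-\eps$ the coordinate of $\omega^\eps$ equals that of $\omega$, and with probability $\eps$ it is resampled to be $1$ with probability $p$. One computes the four joint probabilities for a single coordinate of $(\omega, \omega^\eps)$ and compares them with the four joint probabilities for a single coordinate of $(\psi\xi, \psi\xi')$, namely $\Pr(\psi=1)\Pr(\xi=1)\Pr(\xi'=1)$ etc., i.e.\ $(r \cdot (p/r)^2, \, r\cdot(p/r)(1-p/r), \ldots)$ together with the event $\psi = 0$. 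Setting these equal forces precisely the relation $r = p/(1-\eps(1-p))$ (equivalently $1 - p/r = \eps(1-p)/(1-\eps(1-p))$), and then the two single-coordinate joint laws agree. Since all coordinates are independent in both constructions, the full joint laws agree, and taking expectations of $f\otimes f$ against the common law yields~\eqref{eq:varNSequiv}.

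The main obstacle, such as it is, is purely bookkeeping: one must be careful that the ``resample'' in the definition of $\omega^\eps$ is done according to $\Pr_p$ (not a fair coin), so that $\omega^\eps$ is itself distributed as $\Pr_p$ and the pair $(\omega,\omega^\eps)$ is exchangeable — matching the manifest exchangeability of $(\psi\xi,\psi\xi')$. Beyond that, one should note the harmless boundary issue that the NS$_p$ definition quantifies over all $\eps > 0$ whereas $r < 1$ forces $\eps < 1$; values $\eps \ge 1$ only make the noise correlation smaller and are covered a fortiori, so no loss occurs. With~\eqref{eq:varNSequiv} in hand, the bijectivity of $\eps \mapsto r$ closes the argument as described in the first paragraph.
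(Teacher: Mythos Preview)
Your proposal is correct and follows essentially the same route as the paper: establish the bijection $\eps\leftrightarrow r$ via $r=p/(1-\eps(1-p))$, reduce everything to the identity~\eqref{eq:varNSequiv}, and verify that identity by coupling $(\omega,\omega^\eps)$ with $(\psi\cdot\xi,\psi\cdot\xi')$ for two i.i.d.\ copies $\xi,\xi'\sim\Pr_{p/r}$ conditioned on the same $\psi\sim\Pr_r$. The paper leaves the coordinate-wise verification of equality in law as ``easy to verify'', whereas you spell it out; otherwise the arguments coincide.
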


In addition, we will below, in Proposition~\ref{prop:var2step}, show that if the observation of $\eta_r$ is asymptotically clueless for the outcome of $f_n(\eta_p)$ for some $r\in(p,1)$, then it is for every $r\in(p,1)$.

\subsection{Noise sensitivity of percolation on random subgraphs of the square lattice}

The motivation behind the pioneering work of Benjamini, Kalai and Schramm was to study sensitivity to perturbation and dynamics in the context of bond (or site) percolation. Central to percolation theory is the study of long-range connections. A natural criteria for noise sensitivity of a percolation model can therefore be expressed in terms of existence of box-crossings.
We will here consider bond percolation on $\Z^2$, although analogous results hold for site percolation on the triangular lattice $\Tb$, both of which have critical parameter $p_c=1/2$. For a more extensive introduction to noise sensitivity and percolation we refer the reader to~\cite{garste12}.

Let $E$ denote the set of edges of the lattice $\Z^2$. Let $\Lambda_{n,m}$ denote the rectangle $[0,n]\times[0,m]$, and let $\Cc(\Lambda_{n,m})\subset\{0,1\}^E$ denote the event that there exists a horizontal crossing of present edges in $\Lambda_{n,m}$. Benjamini, Kalai and Schramm~\cite{benkalsch99} proved that the sequence of functions encoding the events $\Cc(\Lambda_{n+1,n})$ is noise sensitive at criticality, i.e.\ NS$_\half$.

Although Benjamini, Kalai and Schramm stated their result for crossings of rectangles of dimension $(n+1)\times n$, it is straightforward to verify that their proof is valid for any sequence of rectangles of bounded aspect ratios. More precisely, let $(\Lambda_{n,m(n)})_{n\ge1}$ be any sequence of rectangles of bounded aspect ratios, meaning that $m(n)/n$ is bounded away from both zero and infinity. Let $g_n:\{0,1\}^E\to\{0,1\}$ denote the indicator function of $\Cc(\Lambda_{n,m(n)})$. Then, the argument of~\cite{benkalsch99} shows that the sequence $(g_n)_{n\ge1}$ is NS$_\half$, and as an immediate consequence of Proposition~\ref{prop:varNSequiv}, we further see that for every $r\in(\frac{1}{2},1]$
\be\label{eq:clueless_crossing}
\Var\big(\E\big[g_n(\eta_\half)\big|\,\eta_r\big]\big)\to0\quad\text{as }n\to\infty.
\ee
Based on~\eqref{eq:clueless_crossing} we will show that bond percolation on a uniformly chosen subgraph of the square lattice (with prescribed edge density) is noise sensitive at criticality. A uniformly chosen subgraph of density $r\in(\frac{1}{2},1]$ is uniquely determined by an element $\psi\in\{0,1\}^E$ chosen according to $\Pr_r$. Since the critical probability for bond percolation on the square lattice is $1/2$, the critical probability for bond percolation on the subgraph determined by $\psi$ equals $1/2r$. We will prove the following extension of the result of~\cite[Theorem~1.2]{benkalsch99}.

\begin{thm}\label{thm:NSx}
For every $r\in(\frac{1}{2},1]$, the existence of bond percolation crossings on the subgraph of the square lattice determined by the set of $r$-present edges is almost surely noise sensitive at criticality. More formally,
$$
\lim_{n\to\infty}\E_{\sfrac{1}{2r}}\big[g_n(\psi\cdot\xi)g_n(\psi\cdot\xi^\eps)\big|\,\psi\big]-\E_{\sfrac{1}{2r}}\big[g_n(\psi\cdot\xi)\big|\,\psi\big]^2=0\quad\text{for $\Pr_r$-almost every }\psi\in\{0,1\}^E.
$$
\end{thm}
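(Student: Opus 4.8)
The plan is to prove the displayed limit for a fixed $\eps\in(0,1)$ and $\Pr_r$-a.e.\ $\psi$; this suffices, since for fixed $\psi$ the quantity
$$D_n(\psi,\eps):=\E_{1/(2r)}\big[g_n(\psi\cdot\xi)g_n(\psi\cdot\xi^\eps)\,\big|\,\psi\big]-\E_{1/(2r)}\big[g_n(\psi\cdot\xi)\,\big|\,\psi\big]^2$$
is nonincreasing in $\eps$, so one may intersect over $\eps\in\{1/j:j\ge1\}$. Two observations set the stage. Writing $(g_n)_\psi$ for the increasing function $\xi\mapsto g_n(\psi\cdot\xi)$, one has $D_n(\psi,\eps)=\sum_{S\ne\emptyset}(1-\eps)^{|S|}\widehat{(g_n)_\psi}(S)^2\ge0$ in the $(1/(2r))$-biased Fourier basis (equivalently, $D_n\ge0$ by FKG). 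I would therefore first establish the \emph{annealed} statement $\E_r[D_n(\cdot,\eps)]\to0$, and then upgrade it to an almost sure statement.

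For the annealed bound, put $F_n(\psi):=\E_{1/(2r)}[g_n(\psi\cdot\xi)\mid\psi]=\E[g_n(\eta_{1/2})\mid\eta_r=\psi]$, so that $\E_r[D_n]=\E_r\E_{1/(2r)}[g_n(\psi\cdot\xi)g_n(\psi\cdot\xi^\eps)]-\Var_r(F_n)-\E_r[F_n]^2$. By \eqref{eq:clueless_crossing}, $\Var_r(F_n)\to0$, and $\E_r[F_n]=\E_{1/2}[g_n]$. The remaining term I would handle by checking, from the coordinate-wise transition kernel, that with $\psi\sim\Pr_r$, $\xi\sim\Pr_{1/(2r)}$ and $\xi^\eps$ the $\eps$-resampling of $\xi$, the pair $(\psi\cdot\xi,\,\psi\cdot\xi^\eps)$ has exactly the law of $(\eta_{1/2},\,\eta_{1/2}^{\,\eps'})$ with $\eps':=\eps(2r-1)/r$, which lies in $(0,1)$ precisely because $r>1/2$. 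Hence this term equals $\E_{1/2}[g_n(\omega)g_n(\omega^{\eps'})]$, and it is $\E_{1/2}[g_n]^2+o(1)$ by the noise sensitivity NS$_{1/2}$ of $(g_n)$ due to Benjamini--Kalai--Schramm. Combining, $\E_r[D_n(\cdot,\eps)]=o(1)-\Var_r(F_n)\to0$; since $D_n\ge0$, this is convergence in $L^1(\Pr_r)$, hence in $\Pr_r$-probability.

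The real work is the upgrade to almost sure convergence. I would first reduce to showing that a single constant vanishes. For a fixed edge $e_0$, being pivotal for the crossing of the growing rectangle $\Lambda_{n,m(n)}$ forces a four-arm event from $e_0$ out to distance of order $n$; for $\Pr_r$-a.e.\ $\psi$ the subgraph $\psi$ has critical parameter $1/(2r)$, so $\psi\cdot\xi$ is a critical configuration whose arm probabilities decay, whence the $\xi$-probability that $e_0$ is pivotal for $\{g_n=1\}$ tends to $0$, and therefore $D_n(\psi,\eps)-D_n(\psi^{\oplus E_0},\eps)\to0$ $\Pr_r$-a.s.\ for every finite edge-set $E_0$. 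Thus $\limsup_n D_n(\cdot,\eps)$ is invariant under all finite coordinate flips, hence $\Pr_r$-a.s.\ a constant $L$, while Fatou and $\E_r[D_n]\to0$ give $\liminf_n D_n(\cdot,\eps)=0$ $\Pr_r$-a.s. To force $L=0$ I would invoke the \emph{quantitative} noise sensitivity of percolation crossings (Schramm--Steif; Garban--Pete--Schramm): their revealment/spectral estimates sharpen the $o(1)$ above to a polynomial bound $\E_r[D_n(\cdot,\eps)]\le C(r,\eps)\,n^{-c}$, so Borel--Cantelli along $n_k=2^k$ gives $D_{n_k}(\cdot,\eps)\to0$ $\Pr_r$-a.s.; comparing crossing events at the neighbouring scales $n_k,n_{k+1}$ via RSW and quasi-multiplicativity then controls $\sup_{n_k\le n<n_{k+1}}D_n$ and yields $L=0$. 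An intersection over $\eps\in\{1/j\}$ completes the proof.

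I expect the last step — propagating almost sure convergence from a sparse subsequence to the full sequence — to be the main obstacle: crossing \emph{probabilities} are comparable across scales of bounded aspect ratio by RSW, but the noise correlation $D_n$ at an intermediate scale is not obviously comparable to those at $n_k$ and $n_{k+1}$, so this comparison must be made quantitative. A route that sidesteps the subsequence is to use the characterization of noise sensitivity for monotone functions: $D_n(\psi,\cdot)\to0$ for all $\eps$ iff the level-one weight $V_n(\psi):=\sum_{|S|=1}\widehat{(g_n)_\psi}(S)^2$ — which is, up to a constant, $\sum_{e:\psi_e=1}$ (the $\xi$-probability that $e$ is pivotal for $g_n$ in $\psi\cdot\xi$)$^2$ — tends to $0$; since $V_n$ is a sum of essentially local functionals of $\psi$, one may hope to bound $\Pr_r(V_n>\delta)$ by a summable quantity via a concentration inequality (single-edge changes move $V_n$ little) and apply Borel--Cantelli directly. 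In either approach the passage from annealed to quenched is the crux.
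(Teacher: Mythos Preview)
Your annealed computation is correct and rather elegant: the coupling identity $(\psi\cdot\xi,\psi\cdot\xi^\eps)\stackrel{d}{=}(\omega,\omega^{\eps'})$ with $\eps'=\eps(2r-1)/r$ is a nice observation, and together with~\eqref{eq:clueless_crossing} it does yield $\E_r[D_n(\cdot,\eps)]\to0$. The difficulty, as you yourself flag, lies entirely in the upgrade from $L^1$ to almost sure convergence, and there the argument has genuine gaps. The tail-$\sigma$-algebra reduction is circular: you justify $D_n(\psi,\eps)-D_n(\psi^{\oplus e_0},\eps)\to0$ by asserting that ``$\psi\cdot\xi$ is a critical configuration whose arm probabilities decay'', but quenched arm-probability decay for the random subgraph is exactly the kind of statement one is trying to establish --- knowing only that the critical parameter of the subgraph is $1/(2r)$ does not furnish RSW or arm estimates. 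Even granting tail-triviality, the passage from the geometric subsequence $n_k=2^k$ to all $n$ has no mechanism: $D_n$ has no monotonicity in $n$, and any ``RSW comparison across scales'' would again be a quenched RSW for the random environment $\psi$, which is unavailable a priori. Your alternative via the level-one weight has the same problem: the influence $I_e^{1/(2r)}(g_n^\psi)$ is a four-arm probability in the subgraph and depends on $\psi$ throughout the entire box, so it is not a local functional of $\psi$ and standard bounded-difference concentration does not apply.

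The paper's proof takes a different route that bypasses the annealed-to-quenched passage altogether. The key point is that cluelessness~\eqref{eq:clueless_crossing} already delivers a \emph{quenched} RSW estimate: by Chebyshev, the conditional crossing probability $\Pr_{1/(2r)}(\psi\cdot\xi\in\Cc(\Lambda_{n,3n})\mid\psi)$ lies in $(c,1-c)$ with $\Pr_r$-probability tending to $1$ (Lemma~\ref{lma:rsw}). From this one obtains a quenched polynomial one-arm bound holding with polynomially high $\Pr_r$-probability (Lemma~\ref{lma:revealment}), which controls the revealment of the left/right exploration algorithms for $g_n^\psi$. A union bound over edges and Borel--Cantelli then give $\delta_{1/(2r)}(\Ac_R,K_R)(\log n)^6\to0$ for $\Pr_r$-a.e.\ $\psi$, and the deterministic-algorithm criterion (Proposition~\ref{prop:detalg}) yields NS$_{1/(2r)}$ for $g_n^\psi$. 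In short, rather than proving an annealed noise-sensitivity estimate and then struggling to upgrade it, the paper extracts from~\eqref{eq:clueless_crossing} precisely the quenched box-crossing input needed to run the BKS revealment machinery directly on the quenched function.
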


Note that $r=1$ retains the result of~\cite{benkalsch99}.

\subsection{Clueless observations outside the near-critical regime for percolation}

We know as of~\eqref{eq:clueless_crossing} that the information regarding $g_n(\eta_\half)$ we obtain from $\eta_r$ for any $r\neq1/2$ is insignificant for large $n$. We can turn the question around, and for large but finite $n$ ask at which point, as $r$ approaches $1/2$, we call tell whether $\eta_\half$ will contain a horizontal crossing of an $n\times n$-square. The answer to this question is as $r$ enters the so-called near-critical regime.

The \emph{near-critical regime} of percolation can be described in a variety of essentially equivalent ways, see e.g.~\cite{kesten87,nolin08}. We will here opt for a description in terms of the expected number of pivotal edges. Given $\omega\in\{0,1\}^n$, a bit $i\in[n]$ is said to be \emph{pivotal} for the function $f:\{0,1\}^n\to\{0,1\}$ if changing the state of $\omega$ at $i$ will change the outcome of $f(\omega)$.
For $(g_n)_{n\ge1}$ defined as above and $\omega\in\{0,1\}^E$, let $\Ps_n=\Ps_{n}(\omega)$ denote the set of pivotal edges for $g_n$, and $|\Ps_n|$ its size.

The near-critical regime can somewhat loosely be described as the regime around $p_c=1/2$ such that if $p$ changes from $p_c$ to some other value $r$ in this regime, then we expect few pivotal edges to change their states. As we approach criticality (either from above or from below), the expected number of edges pivotal at criticality that will change state from $\eta_r$ to $\eta_\half$ equals $|r-\frac{1}{2}|\cdot\E_\half|\Ps_n|$. In case $|r-\frac{1}{2}|\cdot\E_\half|\Ps_n|\ll1$ it is unlikely that any pivotal edge will change at all, and we can expect that $\eta_r$ will describe quite well the outcome of $g_n(\eta_\half)$. On the other hand, if $|r-\frac{1}{2}|\cdot\E_\half|\Ps_n|\gg1$, then we expect many pivotal edges to change there value, and that the left and right sides of $\Lambda_{n,m(n)}$ should therefore be well-connected in $\eta_r$. In addition, this leaves the possibility that the observation of $\eta_r$ will give little information as to the outcome of $g_n(\eta_\half)$, since many changes are still to 
occur.

Verifying that this heuristic reasoning is correct is straightforward in the former case, whereas it in the latter case is a consequence of a much deeper argument due to Garban, Pete and Schramm~\cite{garpetsch10}. We will based on their work show how to derive the following result.

\begin{thm}\label{thm:nearcritical}
Let $(g_n)_{n\ge1}$ be defined as above, encoding the existence of horizontal crossings for a sequence of rectangles of bounded aspect ratios, and let $(r_n)_{n\ge1}$ take values in $[0,1]$. If $|r_n-\frac{1}{2}|\cdot\E_\half|\Ps_n|\to\infty$ as $n\to\infty$, then
\be\label{eq:nc1}
\lim_{n\to\infty}\Var\big(\E\big[g_n(\eta_\half)\big|\,\eta_{r_n}\big]\big)=0.
\ee
On the other hand, if $|r_n-\frac{1}{2}|\cdot\E_\half|\Ps_n|\to0$ as $n\to\infty$, then
\be\label{eq:nc2}
\lim_{n\to\infty}\Pr\big(g_n(\eta_\half)\neq g_n(\eta_{r_n})\big)=0.
\ee
\end{thm}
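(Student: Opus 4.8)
The plan is to treat the two regimes separately, as the theorem's hypotheses suggest.

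For the first part, \eqref{eq:nc1}, I would use the near-critical coupling interpretation already developed in the paper. Writing $\eta_{1/2}$ and $\eta_{r_n}$ on the same probability space via the uniform weights, the key quantity is the number of edges pivotal for $g_n$ at criticality whose state differs between $\eta_{r_n}$ and $\eta_{1/2}$; its expectation is exactly $|r_n-\tfrac{1}{2}|\cdot\E_\half|\Ps_n|$. The heart of the argument here is the scaling result of Garban, Pete and Schramm~\cite{garpetsch10}: they show (via the spectral sample / randomized algorithm machinery and the quasi-multiplicativity of the four-arm exponent) that $\eta_{1/2}$ and $\eta_{r_n}$ look asymptotically like two independent configurations for determining $g_n$ once the expected number of pivotal switches tends to infinity. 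Concretely, I would invoke their statement that $\E[g_n(\eta_{1/2})g_n(\eta_{r_n})]-\E[g_n(\eta_{1/2})]\E[g_n(\eta_{r_n})]\to 0$ under $|r_n-\tfrac{1}{2}|\cdot\E_\half|\Ps_n|\to\infty$. Then a short computation converts this decorrelation into the vanishing of $\Var(\E[g_n(\eta_{1/2})\mid\eta_{r_n}])$: expanding the variance gives $\E[g_n(\eta_{1/2})g_n(\eta_{1/2}')]-\E[g_n(\eta_{1/2})]^2$ where $\eta_{1/2}$ and $\eta_{1/2}'$ are two independent copies at level $1/2$ conditioned on the same $\eta_{r_n}$, and one checks that this equals the above covariance-type expression up to lower-order terms coming from the monotonicity $p=1/2<r_n$ versus $p=1/2>r_n$ being handled by the zero/one switch mentioned in the introduction. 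Care is needed because $r_n$ may be on either side of $1/2$, so I would treat $r_n<1/2$ by passing to complements, where $g_n$ becomes the crossing event in the dual direction, also of bounded aspect ratio.

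For the second part, \eqref{eq:nc2}, the argument is elementary and matches the paper's own description of it as "straightforward". Couple $\eta_{1/2}$ and $\eta_{r_n}$ through the weights so that they differ only on edges whose weight lies between $r_n$ and $1/2$. The event $\{g_n(\eta_{1/2})\neq g_n(\eta_{r_n})\}$ forces some pivotal edge (for whichever of the two configurations) to lie in the symmetric difference. Using monotonicity of the crossing event, $\{g_n(\eta_{1/2})\neq g_n(\eta_{r_n})\}$ implies there is an edge pivotal for $g_n$ at the smaller parameter whose state is flipped, hence
\[
\Pr\big(g_n(\eta_\half)\neq g_n(\eta_{r_n})\big)\le \E\big[\#\{e\in\Ps_n(\eta_{\min(1/2,r_n)}): e\text{ flips}\}\big]\le |r_n-\tfrac12|\cdot C\,\E_\half|\Ps_n|,
\]
where the last step uses that the expected number of pivotals at the two nearby parameters are comparable (again by quasi-multiplicativity of the four-arm event, or more crudely by Russo's formula and monotonicity of pivotal probabilities across the near-critical window, which is standard). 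Since the right-hand side tends to $0$ by hypothesis, \eqref{eq:nc2} follows. The only slightly delicate point is justifying that $\E_{r_n}|\Ps_n|$ and $\E_{1/2}|\Ps_n|$ are of the same order throughout the near-critical window $|r_n-1/2|\cdot\E_\half|\Ps_n|=O(1)$; this is exactly the content of Kesten's near-critical analysis~\cite{kesten87,nolin08}, which I would cite rather than reprove.

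The main obstacle is the first part: everything there rests on importing the correct black-box statement from~\cite{garpetsch10}. The subtlety is that their theorem is usually phrased in terms of dynamical percolation or in terms of the exceptional-times / spectral-sample formulation, so the work is to extract precisely the decorrelation estimate $\Cov(g_n(\eta_{1/2}),g_n(\eta_{r_n}))\to 0$ in the regime $|r_n-1/2|\E_\half|\Ps_n|\to\infty$ and to verify it applies to rectangles of bounded (not just unit) aspect ratio — which it does, since the four-arm exponent input is scale-invariant. Once that statement is in hand, the passage to the conditional variance via Proposition~\ref{prop:varNSequiv} and \eqref{eq:varNSequiv}-style manipulations is routine.
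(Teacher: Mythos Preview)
Your overall strategy for Part~1 is the right one (and is acknowledged in the paper's remark preceding the proof), but you have misidentified the black box. Garban--Pete--Schramm do \emph{not} prove the two-level decorrelation $\Cov\big(g_n(\eta_{1/2}),g_n(\eta_{r_n})\big)\to0$; that quantity is in fact trivially small by monotonicity, since for $r_n>1/2$ one has $g_n(\eta_{1/2})g_n(\eta_{r_n})=g_n(\eta_{1/2})$ and hence the covariance is at most $1-\Pr_{r_n}(\text{crossing})$. What~\cite{garpetsch10} proves is the quantitative noise sensitivity statement~\eqref{eq:qNS}, i.e.\ $\E_\half[g_n(\omega)g_n(\omega^{\eps_n})]-\E_\half[g_n(\omega)]^2\to0$ whenever $\eps_n\E_\half|\Ps_n|\to\infty$. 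Your own expansion of the conditional variance is correct, and the pair $(\eta_{1/2},\eta_{1/2}')$ conditionally independent given $\eta_{r_n}$ is \emph{exactly} distributed as $(\omega,\omega^{\eps_n})$ for $\eps_n=(2r_n-1)/r_n$; this is the identity~\eqref{eq:varNSequiv}. So no ``lower-order terms'' or comparison with the mixed covariance is needed: the conditional variance \emph{is} the noise-sensitivity quantity, and~\eqref{eq:nc1} follows directly. The paper's own proof takes the equivalent route of combining the Fourier formula~\eqref{eq:var2step} with the spectral-sample concentration~\eqref{eq:spectraltail} from~\cite{garpetsch10}, which amounts to unpacking~\eqref{eq:qNS} one level.

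For Part~2 your argument is correct and, if anything, slightly more careful than the paper's: the paper bounds directly by $(r_n-\tfrac12)\E_\half|\Ps_n|$ via a first-pivotal-flip union bound, whereas you route through $\E_{\min(1/2,r_n)}|\Ps_n|$ and then invoke Kesten's near-critical stability to compare. Either way works in the regime $|r_n-\tfrac12|\E_\half|\Ps_n|\to0$, which lies well inside the scaling window.
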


It is known that the expected number of pivotal edges is equivalent to $n^2\alpha_4(n)$ up to multiplicative constants, where $\alpha_4(n)$ denotes the probability that at criticality there are four arms, alternating between primal and dual, connecting $[-1,1]^2$ to the boundary of $[-n,n]^2$. That is, for some universal constant $c>0$
$$
c\,n^2\alpha_4(n)\;\le\;\E_\half|\Ps_n|\;\le\;\frac{1}{c}\,n^2\alpha_4(n).
$$
(For $n\times n$-squares this was obtained in~\cite[combine~(7.3) and~(2.10)]{garpetsch10}, and the argument extends straightforwardly to rectangles with bounded aspect ratios.) The precise asymptotics of $\alpha_4(n)$ is not known for bond percolation on $\Z^2$. The best know estimates give a lower bound of order $n^{-(2-\delta)}$ and an upper bound of order $n^{-(1+\delta)}$, for some $\delta>0$ (see e.g.~\cite[(2.6)]{garpetsch10}). This shows that $\E_\half|\Ps_n|$ grows at least polynomially in $n$. Critical site percolation on the triangular lattice $\Tb$ is more precisely understood, due to SLE theory and Smirnov's theorem. It is there known that the corresponding objects $\alpha_4^\Tb(n)$ and $\Ps_n^\Tb$ behave as $\alpha_4^\Tb(n)=n^{-5/4+o(1)}$ and consequently $\E_\half|\Ps_n^\Tb|=n^{3/4+o(1)}$, as $n\to\infty$, see~\cite{smiwer01}.

We may unravel what the above discussion says regarding Theorem~\ref{thm:nearcritical}. For bond percolation on $\Z^2$ it gives the existence of a constant $\delta>0$ such that~\eqref{eq:nc1} holds for $r_n=\frac{1}{2}+n^{-\delta}$, whereas~\eqref{eq:nc2} holds for $r_n=\frac{1}{2}+n^{-(1-\delta)}$. For site percolation on $\Tb$, and $r_n=\frac{1}{2}+n^{-\delta}$, the analogous statement of~\eqref{eq:nc1} holds for all $\delta<3/4$, whereas~\eqref{eq:nc2} holds for all $\delta>3/4$.

\begin{remark}
The work behind Theorem~\ref{thm:nearcritical} was almost entirely carried out in~\cite{garpetsch10}. However, the statement presented here does not seem to have been previously known. The first part of Theorem~\ref{thm:nearcritical}, which is the deeper statement, is indeed an easy consequence of~\eqref{eq:varNSequiv} and the result of~\cite[Corollary~1.2]{garpetsch10} stating that if $\eps_n\cdot\E_\half|\Ps_n|\to\infty$, then
\be\label{eq:qNS}
\E_\half\big[g_n(\omega)g_n(\omega^{\eps_n})\big]-\E_\half\big[g_n(\omega)\big]^2\to0\quad\text{as }n\to\infty.
\ee
Nevertheless, we will in Section~\ref{sec:nearcritical} opt for a slightly more detailed proof, based on~\cite[Theorem~1.1]{garpetsch10}, in order to emphasize the role of discrete Fourier analysis.
\end{remark}


\section{Fourier-Walsh representation}

A tool that has turned out to be very useful in connection to the study of Boolean functions is discrete Fourier analysis. For $\omega\in\{0,1\}^n$, $p\in(0,1)$ and $i\in [n]$, define
\[
\chi_i^p(\omega):=\left\{
\begin{array}{cc}
-\sqrt{\frac{1-p}{p}} & \textrm{if } \omega_i=1, \\
\sqrt{\frac{p}{1-p}} & \textrm{otherwise. }
\end{array}
\right.
\]
Furthermore, for $S\subset [n],$ let $\chi_S^p(\omega):=\prod_{ i \in S } \chi_i^p(\omega)$. (In particular, $\chi_\emptyset^p$ is the constant 1.) By independence of bits, we observe that for $i \neq j$
$$
\E_p\big[\chi_i^p(\omega)\chi_j^p(\omega)\big] \,= \, \E_p\big[\chi_i^p(\omega)\big]\E_p\big[\chi_j^p(\omega)\big] \, = \,0,\quad\text{and}\quad\E_p\big[\chi_i^p(\omega)^2\big]\,=\,1.
$$
In fact, it follows that the set $\{\chi_S^p\}_{S\subset [n]}$ forms an orthonormal basis for the space of real-valued functions $f:\{0,1\}^n\to \R$, where $\{0,1\}^n$ is endowed with $\Pr_p$. Functions in this space can therefore be expressed using \emph{Fourier-Walsh representation}:
\begin{equation}\label{eq:FW}
f(\omega)=\sum_{S\subset [n]} \hat{f}^p(S)\chi_S^p(\omega),
\end{equation}
where $\hat{f}^p(S) := \E_p[ f \chi_S^p]$ are referred to as \emph{Fourier coefficients}. From now on, we will not stress that $S\subset [n]$ in the notation. Since $\{\chi_S^p\}_{S\subset[n]}$ is an orthonormal basis, we obtain from \eqref{eq:FW} that
\begin{equation}\label{eq:FWvar}
\Var_p(f)=\sum_{S\neq\emptyset}\hat{f}^p(S)^2.
\end{equation}
Noise sensitivity can be characterized in a similar way. First, note that $\E_p\big[ \chi_S^p(\omega) \chi_{S'}^p(\omega^\eps) \big] = 0$ if $S \neq S'$, and that $\E_p\big[\chi_S^p(\omega)\chi_S^p(\omega^\eps)\big] = (1-\eps)^{|S|}$, since the expectation is zero whenever at least one of the coordinates $\{ \omega_i : i \in S \}$ is re-randomized, and one otherwise. From~\eqref{eq:FW} it follows that
\be\label{eq:NSF}
\E_p\big[f_n(\omega) f_n(\omega^{\eps}) \big] - \E_p\big[f_n(\omega)\big]^2 \; =\; \sum_{S\neq \emptyset}\hat{f}_n^p(S)^2(1-\eps)^{|S|}.
\ee
It is now easily seen that for any $p\in(0,1)$ and sequence $(f_n)_{n\ge1}$ of functions $f_n:\{0,1\}^n\to[0,1]$
\bea
(f_n)_{n\ge1}\text{ is NS$_p$}\quad\Leftrightarrow\quad\lim_{n\to\infty} \sum_{0 < |S| \le k}\hat{f}_n^p(S)^2 = 0\text{ for each }k\in\N,
\eea
since the sum of Fourier coefficients squared in this case is uniformly bounded as of~\eqref{eq:FWvar}.

\section{The universal variance bound: Proof of Theorem~\ref{thm:var2step}}

Let $0<p<r<1$ and $f:\{0,1\}^n\to\R$ be given. The aim of this section is to relate $f$ and $\E_{\sfrac{p}{r}}\big[f(\psi\cdot\xi)\big|\,\psi\big]$ with respect to the two `scales' governed by $\Pr_p$ and $\Pr_r$. Define $h_f:\{0,1\}^n\to\R$,
$$
h_f(\psi):=\E_{\sfrac{p}{r}}\big[f(\psi\cdot\xi)\big|\,\psi\big].
$$
The key to prove Theorem~\ref{thm:var2step} will be to relate $f$ and $h_f$ in terms of their Fourier coefficients.

\begin{prop}\label{prop:hfq}
$\quad\displaystyle{\widehat{h_f}^r(S)\,=\,\left(\frac{p}{r}\,\frac{1-r}{1-p}\right)^{|S|/2}\hat{f}^p(S).}$
\end{prop}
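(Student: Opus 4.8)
The plan is to compute the Fourier--Walsh coefficients of $h_f$ with respect to $\Pr_r$ directly from the definition, using the product structure of everything in sight. Since $\widehat{h_f}^r(S) = \E_r[h_f(\psi)\chi_S^r(\psi)]$ and $h_f(\psi) = \E_{p/r}[f(\psi\cdot\xi)\mid\psi]$, I would first expand $f$ in its Fourier--Walsh basis with respect to $\Pr_p$, namely $f(\omega) = \sum_T \hat f^p(T)\chi_T^p(\omega)$, and then substitute $\omega = \psi\cdot\xi$. By linearity of expectation, the whole problem reduces to understanding, for a fixed set $T$, the quantity
\begin{equation*}
\E_r\Big[\E_{p/r}\big[\chi_T^p(\psi\cdot\xi)\,\big|\,\psi\big]\,\chi_S^r(\psi)\Big],
\end{equation*}
where $\psi\sim\Pr_r$ and $\xi\sim\Pr_{p/r}$ are independent. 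Because $\chi_T^p(\psi\cdot\xi) = \prod_{i\in T}\chi_i^p(\psi_i\xi_i)$ and $\chi_S^r(\psi) = \prod_{i\in S}\chi_i^r(\psi_i)$, and the bits are independent across coordinates, this expectation factorizes over $i\in S\cup T$ into a product of one-coordinate expectations. So the heart of the matter is a purely one-variable computation.

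For a single coordinate I would evaluate the four relevant one-bit quantities: $\E_{p/r}[\chi_i^p(\psi_i\xi_i)\mid \psi_i = 1]$, $\E_{p/r}[\chi_i^p(\psi_i\xi_i)\mid\psi_i=0]$, and then the expectation over $\psi_i\sim\Pr_r$ of these multiplied by $\chi_i^r(\psi_i)$ (or by $1$, for indices in $T\setminus S$). When $\psi_i = 0$ we have $\psi_i\xi_i = 0$ deterministically, so $\chi_i^p$ takes its ``$0$''-value $\sqrt{p/(1-p)}$; when $\psi_i=1$ we have $\psi_i\xi_i = \xi_i$, which is $1$ with probability $p/r$, so the conditional expectation is $(p/r)(-\sqrt{(1-p)/p}) + (1-p/r)\sqrt{p/(1-p)}$. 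The key algebraic facts I expect to fall out are: (i) the coordinate-wise contribution vanishes unless $i\in S$, which forces $S\subseteq T$ in any nonzero term; and (ii) the remaining contribution from coordinates in $T\setminus S$ forces $T\setminus S = \emptyset$, i.e.\ $T = S$, because $\E_r[\E_{p/r}[\chi_i^p(\psi_i\xi_i)\mid\psi_i]] = \E_p[\chi_i^p] = 0$. Thus only the term $T = S$ survives, and it contributes exactly $\left(\frac{p}{r}\frac{1-r}{1-p}\right)^{1/2}$ per coordinate of $S$, giving the claimed $\left(\frac{p}{r}\frac{1-r}{1-p}\right)^{|S|/2}\hat f^p(S)$.

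The main obstacle, such as it is, is purely bookkeeping: carrying out the one-coordinate computation cleanly and making sure the normalization constants $\sqrt{p/(1-p)}$, $\sqrt{(1-p)/p}$, $\sqrt{r/(1-r)}$, $\sqrt{(1-r)/r}$ combine to exactly the stated factor rather than its reciprocal or square. A sanity check I would build in: the case $r = 1$ (so $\psi\equiv 1$ and $\eta_r$ is the full set), where the factor should become $\big(p\cdot 0/(1-p)\big)^{|S|/2}$, correctly killing all $S\neq\emptyset$ since then $h_f$ is constant; and the case $p\to r$, where the factor tends to $1$ and $h_f\to f$. Once Proposition~\ref{prop:hfq} is established, Theorem~\ref{thm:var2step} follows immediately by squaring, summing over $S\neq\emptyset$, using \eqref{eq:FWvar} for both $\Var_r(h_f)$ and $\Var_p(f)$, and bounding $\left(\frac{p}{r}\frac{1-r}{1-p}\right)^{|S|}$ between its value at $|S|=1$ (the upper bound, since $0<\frac{p}{r}\frac{1-r}{1-p}<1$) and at $|S|=n$ (the lower bound).
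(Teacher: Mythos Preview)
Your approach is correct and essentially identical to the paper's: expand $f$ in the $\Pr_p$-basis, reduce by independence to a single-coordinate computation, and verify that only the diagonal term $T=S$ survives with the stated factor. One small slip: in your point (i) you write ``vanishes unless $i\in S$, which forces $S\subseteq T$''---you mean ``unless $i\in T$'' (the factor for $i\in S\setminus T$ is $\E_r[\chi_i^r(\psi_i)]=0$, giving $S\subseteq T$), after which your (ii) correctly handles $T\setminus S$.
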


\begin{proof}
First, write $f=\sum_S\hat{f}^p(S)\,\chi_S^p$, and observe that
\bea
\begin{aligned}
\widehat{h_{f}}^{r}(S')\;&=\;\E_{r}\big[h_{f}(\psi)\cdot\chi^{r}_{S'}(\psi)\big]\;=\;\E_{r}\big[\E_{\sfrac{p}{r}}\big[f(\psi\cdot\xi)\big|\,\psi\big]\chi^{r}_{S'}(\psi)\big]\\
&=\;\sum_S\,\hat{f}^p(S)\E_{r}\big[\E_{\sfrac{p}{r}}\big[\chi^p_S(\psi\cdot\xi)\big|\,\psi\big]\chi^{r}_{S'}(\psi)\big].
\end{aligned}
\eea
Thus, it suffices to determine the Fourier coefficients of $\E_{\sfrac{p}{r}}\big[\chi^p_S(\psi\cdot\xi)\big|\,\psi\big]$ for each $S$. Due to the independence between bits, it will suffice to consider the case of $S=S'$ and $|S|=1$. First,
\begin{equation*}
\E_{\sfrac{p}{r}}\big[\chi^p_i(\psi\cdot\xi)\big|\,\psi\big]\,=\,\left\{
\begin{aligned}
&\sqrt{\frac{p}{1-p}} & \text{if }\psi_i=0,\\
&\sqrt{\frac{p}{1-p}}\left(1-\frac{p}{r}\right)-\sqrt{\frac{1-p}{p}}\,\frac{p}{r}\,=\,\sqrt{\frac{p}{1-p}}\left(1-\frac{1}{r}\right) & \text{if }\psi_i=1.
\end{aligned}
\right.
\end{equation*}
Consequently,
\begin{equation*}
\begin{aligned}
&\E_{r}\big[\E_{\sfrac{p}{r}}\big[\chi^p_i(\psi\cdot\xi)\big|\,\psi\big]\chi^{r}_i(\psi)\big]\\
&\qquad=\;\sqrt{\frac{p}{1-p}}\,\sqrt{\frac{r}{1-r}}\,\Pr_{r}(\psi_i=0)-\sqrt{\frac{p}{1-p}}\left(1-\frac{1}{r}\right)\sqrt{\frac{1-r}{r}}\,\Pr_{r}(\psi_i=1)\\
&\qquad=\;\sqrt{r(1-r)}\,\sqrt{\frac{p}{1-p}}\left(1-\left(1-\frac{1}{r}\right)\right)\;=\;\sqrt{\frac{p}{1-p}}\,\sqrt{\frac{1-r}{r}}.
\end{aligned}
\end{equation*}
Recall that $\chi_S^p(\omega)=\prod_{ i \in S } \chi_i^p(\omega)$, and that each $\chi_i^p(\omega)$ only depends on $\omega_i$. Thus, by independence of bits, it follows that
\begin{equation*}
\E_{r}\big[\E_{\sfrac{p}{r}}\big[\chi^p_S(\psi\cdot\xi)\big|\,\psi\big]\chi^{r}_{S'}(\psi)\big]=\left\{
\begin{aligned}
&\sqrt{\frac{p}{r}\,\frac{1-r}{1-p}}^{|S|} &\text{if }S=S',\\
&0 & \text{if }S\neq S',
\end{aligned}
\right.
\end{equation*}
and the proof is complete.
\end{proof}

\begin{remark}
The relation between $h_ f$ and $f$ was for the special case $r=1/2$ explored in~\cite{Abrogrimor}, in order to obtain a reduction from biased product measure to the uniform case.
\end{remark}

\begin{proof}[\bf Proof of Theorem~\ref{thm:var2step}]
The statement will at this point easily follow from the comparison of Fourier coefficients in Proposition~\ref{prop:hfq} and the variance formula~\eqref{eq:FWvar}. Together they give
\begin{equation}\label{eq:var2step}
\Var_{r}\big(\E_{\sfrac{p}{r}}\big[f(\psi\cdot\xi)\big|\,\psi\big]\big)\;=\;\sum_{S\neq\emptyset}\widehat{h_{f}}^{r}(S)^2\;=\;\sum_{S\neq\emptyset}\left(\frac{p}{r}\,\frac{1-r}{1-p}\right)^{|S|}\hat{f}^p(S)^2.
\end{equation}
By assumption $p<r$, so $p(1-r)<r(1-p)$, which together with~\eqref{eq:var2step} and~\eqref{eq:FWvar} gives that
\bea
\Var_{r}\big(\E_{\sfrac{p}{r}}\big[f(\psi\cdot\xi)\big|\,\psi\big]\big)\;\le\;\frac{p}{r}\,\frac{1-r}{1-p}\,\sum_{S\neq\emptyset}\hat{f}^p(S)^2\;=\;\frac{p}{r}\,\frac{1-r}{1-p}\,\Var_p(f).
\eea
The lower bound follows in a similar fashion.
\end{proof}

Similarly, based on Proposition~\ref{prop:hfq}, we also find the following.

\begin{prop}\label{prop:var2step}
Let $(f_n)_{n\ge1}$ be a sequence of functions $f_n:\{0,1\}^n\to[0,1]$. If
$$
\lim_{n\to\infty}\Var_{r}\big(\E_{\sfrac{p}{r}}\big[f_n(\psi\cdot\xi)\big|\,\psi\big]\big)=0
$$
for some $r\in(p,1)$, then it does for every $r\in(p,1)$.
\end{prop}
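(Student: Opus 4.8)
The plan is to pass to the Fourier side and reduce the statement to an elementary estimate on nonnegative sequences. Combining Proposition~\ref{prop:hfq} with the variance formula~\eqref{eq:FWvar}, exactly as in~\eqref{eq:var2step}, one obtains for every $r\in(p,1)$ that
$$
\Var_{r}\big(\E_{\sfrac{p}{r}}\big[f_n(\psi\cdot\xi)\big|\,\psi\big]\big)\;=\;\sum_{S\neq\emptyset}\lambda(r)^{|S|}\,\hat{f}_n^p(S)^2,\qquad\lambda(r):=\frac{p}{r}\,\frac{1-r}{1-p}.
$$
Since $r\mapsto\lambda(r)$ is strictly decreasing on $(p,1)$ with $\lambda(p)=1$ and $\lambda(1)=0$, it maps $(p,1)$ bijectively onto $(0,1)$, and the assertion is equivalent to the following: if $\sum_{S\neq\emptyset}\lambda_0^{|S|}\hat{f}_n^p(S)^2\to0$ for some $\lambda_0\in(0,1)$, then $\sum_{S\neq\emptyset}\lambda^{|S|}\hat{f}_n^p(S)^2\to0$ for every $\lambda\in(0,1)$.

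To establish this I would fix $\lambda\in(0,1)$, write $a_S:=\hat{f}_n^p(S)^2\ge0$, and use the uniform energy bound $\sum_S a_S=\E_p[f_n^2]\le1$, valid because $f_n$ takes values in $[0,1]$. For an arbitrary integer $k\ge1$, split the sum at frequency level $k$: on $\{0<|S|\le k\}$ bound $\lambda^{|S|}a_S\le a_S\le\lambda_0^{-k}\lambda_0^{|S|}a_S$, and on $\{|S|>k\}$ bound $\lambda^{|S|}\le\lambda^k$. This yields
$$
\sum_{S\neq\emptyset}\lambda^{|S|}a_S\;\le\;\lambda_0^{-k}\sum_{S\neq\emptyset}\lambda_0^{|S|}a_S\;+\;\lambda^k .
$$
Then, given $\delta>0$, first choose $k$ so large that $\lambda^k<\delta/2$ (possible since $\lambda<1$); with $k$ now fixed, $\lambda_0^{-k}$ is a constant, and the hypothesis makes the first term on the right smaller than $\delta/2$ for all large $n$. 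Letting $\delta\downarrow0$ gives the conclusion.

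I do not anticipate a genuine obstacle here: the mechanism is simply that for $\lambda\in(0,1)$ the weights $\lambda^{|S|}$ act as a low-pass filter with geometric decay, so the low-frequency block is controlled by the hypothesis (up to the harmless constant $\lambda_0^{-k}$) while the high-frequency tail is controlled by the bound $\sum_S\hat{f}_n^p(S)^2\le1$ coming from $f_n:\{0,1\}^n\to[0,1]$. The one point requiring care is the order of quantifiers: the truncation level $k$ depends only on $\lambda$ and $\delta$ and must be chosen before sending $n\to\infty$; note also that the displayed inequality holds for all $\lambda,\lambda_0\in(0,1)$, so no case distinction according to whether $\lambda<\lambda_0$ or $\lambda>\lambda_0$ is needed. (Equivalently, one could route the argument through the Fourier characterisation of NS$_p$, but the direct estimate above is self-contained.)
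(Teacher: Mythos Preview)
Your proposal is correct and follows essentially the same approach as the paper: both rest on the Fourier identity~\eqref{eq:var2step} together with a split of the spectrum at a fixed level $k$, using the energy bound $\sum_S\hat f_n^p(S)^2\le1$ for the tail. The only presentational difference is that the paper packages this as an equivalence with the $r$-free criterion $\sum_{0<|S|\le k}\hat f_n^p(S)^2\to0$ for each $k$ (exactly the alternative you mention in your final parenthesis), whereas you write down the direct comparison inequality between the $\lambda$- and $\lambda_0$-weighted sums; the underlying estimate is the same.
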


\begin{proof}
For $0<p<r<1$, it is easily deduced from~\eqref{eq:var2step} that
$$
\lim_{n\to\infty}\Var_{r}\big(\E_{\sfrac{p}{r}}\big[f_n(\psi\cdot\xi)\big|\,\psi\big]\big)=0\quad\Leftrightarrow\quad\lim_{n\to\infty}\sum_{0<|S|\le k}\hat{f}_n^p(S)^2=0\text{ for each }k\in\N.
$$
Now, if the left-hand side holds for some $r\in(p,1)$, then the right-hand side also holds. But that implies that the left-hand side holds for every $r\in(p,1)$.
\end{proof}

\section{Cluelessness, noise sensitivity and near-critical percolation}\label{sec:nearcritical}

The goal of this section is to prove Proposition~\ref{prop:varNSequiv} and Theorem~\ref{thm:nearcritical}. We begin with the former.

\begin{proof}[\bf Proof of Proposition~\ref{prop:varNSequiv}]
Pick $\eps\in(0,1)$ and set $r=p/(1-\eps(1-p))$. Note that as $\eps$ increases from $0$ to $1$, $r$ simultaneously ranges over $(p,1)$. It therefore suffices to verify that~\eqref{eq:varNSequiv} holds.

Let $\omega$, $\psi$ and $\xi$ be specified according to $\Pr_p$, $\Pr_r$ and $\Pr_{\sfrac{p}{r}}$ respectively, and let $\xi^\prime$ be an independent copy of $\xi$. It is then easy to verify that the pair $(\psi\cdot\xi,\psi\cdot\xi^\prime)$ has the same distribution as $(\omega,\omega^\eps)$. Consequently,
$$
\E_p\big[f_n(\omega)f_n(\omega^\eps)\big]\;=\;\E_r\big[\E_{\sfrac{p}{r}}\big[f_n(\psi\cdot\xi)f_n(\psi\cdot\xi^\prime)\big|\,\psi\big]\big]\;=\;\E_r\big[\E_{\sfrac{p}{r}}\big[f_n(\psi\cdot\xi)\big|\,\psi\big]^2\big],
$$
where the latter equality follows from the independence between $\xi$ and $\xi^\prime$. Hence~\eqref{eq:varNSequiv} holds.
\end{proof}

We now turn to the proof of Theorem~\ref{thm:nearcritical}. In view of~\eqref{eq:NSF} and~\eqref{eq:var2step}, we see that the behaviour of Boolean functions can be understood by studying their Fourier spectrum. A precise description of the Fourier spectrum of critical percolation is indeed the major contribution of~\cite{garpetsch10}.

Given a Boolean function $f:\{0,1\}^n\to\{0,1\}$, the \emph{spectral sample} of $f$ refers to the random variable $\Ss_f$ on subsets of $[n]$ defined by $\Pr(\Ss_f=S):=\hat f^\half(S)^2/\E_\half[f^2]$. (We will only be interested in the case $p=1/2$.) What allows us to relate $\Ss_f$ and the pivotal set $\Ps_f$ are their marginal distributions, i.e.\ that $\Pr_\half(i\in\Ps_f)=\Pr(i\in\Ss_f)$, which implies that
\be\label{eq:P=S}
\E_\half|\Ps_f|\,=\,\E|\Ss_f|.
\ee
This connection was observed already in~\cite{kahkallin88}.

Recall that $(g_n)_{n\ge1}$ is defined as the sequence of functions encoding the existence of a horizontal crossing of some sequence of rectangles with bounded aspect ratios. We will for short write $\Ss_n$ for the spectral sample of $g_n$, as well as we write $\Ps_n$ for its pivotal set. It is the size of the spectral sample $\Ss_n$ we are after. An application of Markov's inequality shows that
$$
\Pr\big(|\Ss_n|>t\E|\Ss_n|\big)\,\le\,\frac{1}{t}.
$$
In fact, $|\Ss_n|$ is concentrated around its mean. This is the main results in~\cite[Theorems~1.1 and~7.4]{garpetsch10}, i.e.\ that
\be\label{eq:spectraltail}
\sup_{n\ge1}\Pr\big(0<|\Ss_n|<t\E|\Ss_n|\big)\to0\quad\text{as }t\to0.
\ee
Our proof of Theorem~\ref{thm:nearcritical} will be based on~\eqref{eq:spectraltail}.

\begin{proof}[\bf Proof of Theorem~\ref{thm:nearcritical}]
By symmetry, it will suffice to consider the case $r>1/2$. First, assume that $|r_n-\frac{1}{2}|\E_\half|\Ps_n|=|r_n-\frac{1}{2}|\E|\Ss_n|\to\infty$ as $n\to\infty$. By~\eqref{eq:spectraltail} we may for every $\gamma>0$ pick $t_0(\gamma)>0$ such that
$$
\sup_{n\ge1}\Pr\big(0<|\Ss_n|<t\E|\Ss_n|\big)<\gamma\quad\text{for all }t<t_0(\gamma).
$$
Fix $t\in(0,t_0(\gamma))$. By~\eqref{eq:var2step} we therefore obtain that
$$
\Var\big(\E\big[g_n(\eta_\half)\big|\,\eta_{r_n}\big]\big)\;=\;\sum_{S\neq\emptyset}\left(\frac{1-r_n}{r_n}\right)^{|S|}\hat g_n(S)^2\;\le\;\gamma+\E[g_n^2]\left(\frac{1-r_n}{r_n}\right)^{t\E|\Ss_n|}.
$$
Note that $2\big(r_n-\frac{1}{2}\big)t^2\E|\Ss_n|>1$ for all large enough $n$. As $\E|\Ss_n|=\E_\half|\Ps_n|\to\infty$ as $n\to\infty$, we have
$$
\left(\frac{1-r_n}{r_n}\right)^{t\E|\Ss_n|}\;\le\;\left(1-\frac{1}{t^2\E|\Ss_n|}\right)^{t\E|\Ss_n|}\;\le\; e^{-1/t}+\gamma
$$
for large $n$. Since $\gamma>0$ and $t>0$ were arbitrary, the first part of the theorem follows.

The proof of the second part of the statement is more direct, as it suffices to estimate the probability of a pivotal change. Note that if $g_n(\eta_\half)\neq g_n(\eta_{r_n})$, then there must have been a flip of a pivotal edge in the interval $(\frac{1}{2},r_n]$. That is, there must be a an edge $e\in\Ps_n$ which is present in $\eta_{r_n}$, but not in $\eta_\half$. Via the union bound, we find that
$$
\Pr\big(g_n(\eta_\half)\neq g_n(\eta_{r_n})\big)\;\le\;\sum_{e\in E}\Pr\big(e\in\Ps_n\text{ and }\xi_e\in(\tfrac{1}{2},r_n]\big)\;=\;(r_n-\tfrac{1}{2})\E_\half|\Ps_n|,
$$
from which the statement follows.
\end{proof}

We remark, in addition, that as $|r_n-\frac{1}{2}|\E_\half|\Ps_n|\to0$ as $n\to\infty$, it follows that
$$
\lim_{n\to\infty}\E\big[\Var\big(g_n(\eta_\half)\big|\,\eta_{r_n}\big)\big]=0.
$$

\section{Percolation on random subgraphs: Proof of Theorem~\ref{thm:NSx}}

In order to prove Theorem~\ref{thm:NSx} we will employ the approach introduced and used in~\cite{benkalsch99} to prove that $(g_n)_{n\ge1}$ is NS$_\half$. Recall that $(g_n)_{n\ge1}$ is the sequence encoding the existence of horizontal crossings in a sequence of rectangles with bounded aspect ratios. For the proof of the case $r<1$ we will need to assume that the result is known for $r=1$, which is precisely the result of~\cite{benkalsch99}. The case $r<1$ will follow from an iteration of the argument used to prove the case $r=1$. A similar approach was in~\cite{Abrogrimor} used to prove that percolation on certain random geometric graphs in $\R^2$ is noise sensitive at criticality.

First, it is necessary to recall some notation. A function $f:\{0,1\}^n\to\R$ is said to be \emph{monotone} if for any $\omega,\omega'\in\{0,1\}^n$ such that $\omega_i\le\omega'_i$ for all $i\in[n]$, then $f(\omega)\le f(\omega')$. A (deterministic) \emph{algorithm} is a rule which, given the bits of $\omega$ already queried, tells you which bit to query next. An algorithm is said to \emph{determine} $f$ if it determines $f(\omega)$ for each $\omega\in\{0,1\}^n$. Finally, for $p\in(0,1)$, the \emph{revealment} of an algorithm $\Ac$ with respect to $K\subset[n]$ is defined as
$$
\delta_p(\Ac,K):=\max_{i\in K}\Pr_p(\Ac\text{ queries bit $i$}).
$$

The deterministic algorithm approach was first introduced in the case $p=1/2$ by Benjamini, Kalai and Schramm~\cite{benkalsch99}. It was later generalized to biased product measure ($p\neq1/2$) in~\cite[Theorem~2.5]{Abrogrimor}. The core of the approach can be synthesized as follows.

\begin{prop}\label{prop:detalg}
Let $p\in(0,1)$ and $\ell\in\N$ be fixed, and let $(f_n)_{n\ge1}$ be a sequence of monotone functions $f_n:\{0,1\}^n\to[0,1]$. For each $n\ge1$, assume that $\Ac_1,\Ac_2,\ldots,\Ac_\ell$ are algorithms determining $f_n$, and that $K_1\cup K_2\cup\ldots\cup K_\ell=[n]$. Then $(f_n)_{n\ge1}$ is NS$_p$ if for each $i=1,2,\ldots,\ell$
$$
\delta_p(\Ac_i,K_i)\big(\log n\big)^6\to0\quad\text{as }n\to\infty.
$$
\end{prop}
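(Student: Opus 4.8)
The plan is to translate NS$_p$ into a statement about the low-frequency Fourier weight of $f_n$, and then to control that weight on each $K_i$ separately by feeding the algorithm $\Ac_i$ into the revealment theorem. First I would fix an arbitrary $k\in\N$ and recall, as noted just after~\eqref{eq:NSF} (and using that $\E_p[f_n^2]\le1$ keeps the total variance~\eqref{eq:FWvar} bounded), that it suffices to prove $\sum_{0<|S|\le k}\hat f_n^p(S)^2\to0$ for this $k$. Since $K_1\cup\cdots\cup K_\ell=[n]$, every nonempty $S\subseteq[n]$ meets at least one of the $K_i$, so
$$
\sum_{0<|S|\le k}\hat f_n^p(S)^2\ \le\ \sum_{i=1}^{\ell}\ \sum_{\substack{0<|S|\le k\\ S\cap K_i\ne\emptyset}}\hat f_n^p(S)^2 .
$$
Because $\ell$ is fixed, it is enough to bound a single inner sum, say for index $i$.

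The heart of the argument is the revealment theorem in the form established for biased product measure in~\cite[Theorem~2.5]{Abrogrimor}, the analogue for $p\ne1/2$ of the bound of Schramm and Steif~\cite{schste10} underlying~\cite{benkalsch99}. Its starting point is the identity $\hat f_n^p(S)=\E_p\big[f_n\,\chi_S^p\,\ind(S\subseteq R)\big]$ for every $S$, where $R=R(\omega)$ is the random set of bits queried by $\Ac_i$: since $\Ac_i$ determines $f_n$, the value $f_n(\omega)$ is measurable with respect to the information gathered during the run, and a coordinate of $S$ lying outside $R$ is then an as-yet-unexamined bit whose character has conditional mean zero. In particular $\Pr_p(S\subseteq R)\le\delta_p(\Ac_i,K_i)$ as soon as $S\cap K_i\ne\emptyset$. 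Bounding $\sum_S\hat f_n^p(S)^2$ crudely from this identity would cost a polynomial factor in $n$ (there are that many small sets $S$), so instead I would run the Schramm--Steif second-moment argument: decompose $f_n$ into its level-$\le k$ Fourier part, use the $R$-measurability of $f_n$ together with the orthonormality of $\{\chi_S^p\}_S$, and apply Cauchy--Schwarz, being careful that in the biased case $|\chi_i^p|\le\max\{\sqrt{p/(1-p)},\sqrt{(1-p)/p}\}$ rather than $\le1$ — it is this last point, together with monotonicity of $f_n$, where the adaptation of~\cite{Abrogrimor} genuinely works harder than the $p=1/2$ case. The outcome is a bound of the shape
$$
\sum_{\substack{0<|S|\le k\\ S\cap K_i\ne\emptyset}}\hat f_n^p(S)^2\ \le\ C(p,k)\,\delta_p(\Ac_i,K_i)\,(\log n)^{6},
$$
with $C(p,k)$ depending only on $p$ and $k$ (we used $\E_p[f_n^2]\le1$). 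Under the hypothesis $\delta_p(\Ac_i,K_i)(\log n)^6\to0$ this tends to $0$; summing over the fixed range $i=1,\dots,\ell$, and since $k\in\N$ was arbitrary, this shows $(f_n)_{n\ge1}$ is NS$_p$.

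I expect the one genuinely substantive step to be the biased revealment inequality in the second paragraph: obtaining control of the low-frequency spectrum that is \emph{linear} in the revealment, with no blow-up from the sheer number of small sets. In the unbiased case this is precisely the Schramm--Steif theorem; the work specific to $p\ne1/2$ — handling the $L^\infty$-unnormalized characters $\chi_i^p$ and exploiting monotonicity — is what~\cite[Theorem~2.5]{Abrogrimor} supplies, and is the ingredient I would invoke rather than reprove. Everything else (the spectral description of NS$_p$ and the union bound over the cover $K_1,\dots,K_\ell$) is routine bookkeeping, and the polylogarithmic slack in the hypothesis is deliberately more than strictly needed, precisely so as to absorb whatever loss that adaptation introduces.
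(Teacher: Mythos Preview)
The paper does not supply its own proof of this proposition; it is stated as a synthesis of known results, with~\cite{benkalsch99} cited for the case $p=1/2$ and~\cite[Theorem~2.5]{Abrogrimor} for general $p$. Your outline --- reduce NS$_p$ to vanishing low-frequency Fourier weight, split over the cover $K_1,\dots,K_\ell$, and invoke the cited inequality on each piece --- is precisely how one would unpack that citation, so in substance your proposal and the paper agree.

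There is, however, a confusion of methods in your narrative. You describe~\cite[Theorem~2.5]{Abrogrimor} as the biased analogue of the Schramm--Steif revealment bound~\cite{schste10} ``underlying~\cite{benkalsch99}'', but this is inverted both chronologically and technically: the deterministic-algorithm approach the paper attributes to~\cite{benkalsch99} predates~\cite{schste10} by a decade and runs through influences and hypercontractivity, not through the $L^2$ conditioning identity $\hat f(S)=\E[f\chi_S\ind(S\subseteq R)]$ that you sketch. That BKS/hypercontractivity route is what produces the $(\log n)^6$ loss and is where monotonicity enters; a genuine Schramm--Steif argument needs neither, and would in fact yield the cleaner bound $\sum_{0<|S|\le k,\,S\cap K_i\ne\emptyset}\hat f_n^p(S)^2\le k\,\delta_p(\Ac_i,K_i)\,\E_p[f_n^2]$ directly, for any $p\in(0,1)$ and without any monotonicity assumption. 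Your invocation of the reference is correct; the surrounding description blends two distinct techniques.
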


Turning to our particular setting, fix $\psi\in\{0,1\}^E$ and define $g_n^\psi:\{0,1\}^E\to\{0,1\}$ as $g_n^\psi(\xi)=g_n(\psi\cdot\xi)$. Theorem~\ref{thm:NSx} states that $(g_n^\psi)_{n\ge1}$ is NS$_{\sfrac{1}{2r}}$ for $\Pr_r$-almost every $\psi\in\{0,1\}^E$. Hence, the aim is to apply Proposition~\ref{prop:detalg} to $(g_n^\psi)_{n\ge1}$ for $\psi$ fixed. (A similar statement was proven in~\cite[Theorem~1.3]{Abrogrimor} in a continuum setting.) Suitable algorithms are defined as follows.

\begin{alg}
Let $\psi$ and $\xi$ be in $\{0,1\}^E$, and let $V_0=\{0\}\times[m(n)]$ denote the left side of the rectangle $\Lambda_{n,m(n)}$. For $k\geq1$, define the algorithm $\mathcal{A}_R$ and the set $V_k$ inductively as follows:
\begin{enumerate}
\item Query all edges contained in $\Lambda_{n,m(n)}$ which has one endpoint in $V_{k-1}$ and one outside.
\item Let $U_k$ denote the set of neighbouring points to $V_{k-1}$ that are endpoints to an edge $e$ queried in the previous step, and for which $\psi_e\cdot\xi_e=1$. Set $V_k=V_{k-1}\cup U_k$.
\item Repeat the above steps until $V_k=V_{k-1}$. At this point $g_n^\psi(\xi)=g_n(\psi\cdot\xi)$ is known.
\end{enumerate}
\end{alg}

Analogously, $\Ac_L$ is obtained by interchanging left and right. Let $K_L$ denote the subset of edges in $E$ contained in left half of the rectangle $\Lambda_{n,m(n)}=[0,n]\times[0,m(n)]$, and let $K_R$ denote the remaining edges in $\Lambda_{n,m(n)}$ -- those to the right. We stress that $K_L$ and $K_R$ are allowed to overlap, and together cover the restriction of $E$ to $\Lambda_{n,m(n)}$. It is easy to see that an edge $e$ in $K_R$ will be queried by $\Ac_R$ if and only if there is a path of $\psi\cdot\xi$-present edges in $\psi$ from $V_0$ to one of the endpoints of $e$. Since each endpoint of $e$ lies in the right half of $\Lambda_{n,m(n)}$, existence of such a path is contained in the event that there is a path of $\psi\cdot\xi$-present edges from an endpoint of $e$ reaching the boundary of a square of side length $n$, centred at $e$. This event is recognized as a `one-arm'-event. We should emphasize that to prove Theorem~\ref{thm:NSx}, the algorithm considers $\psi$ as known, and its revealment 
depends in turn on $\psi$. That is, $\delta_p(\Ac_R,e)=\Pr_p(\Ac_R\text{ queries $\xi$ at $e$}|\,\psi)$.

\begin{lma}\label{lma:revealment}
For every $r>1/2$ and $\alpha>0$, there exists $\beta>0$ such that for every $e\in K_R$
$$
\Pr_r\big(\delta_{\sfrac{1}{2r}}(\Ac_R,e)> n^{-\beta}\big)<n^{-\alpha}
$$
for all sufficiently large $n$. (And analogously for $\Ac_L$ and $K_L$.)
\end{lma}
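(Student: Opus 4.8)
The plan is to bound the $\Pr_r$-probability that the (random, $\psi$-dependent) revealment $\delta_{\sfrac{1}{2r}}(\Ac_R,e)$ is large, by comparing the two-stage exploration in $\psi\cdot\xi$ to ordinary percolation at a near-critical parameter. Fix $e\in K_R$ and let $B$ be the box of side length $n$ centred at $e$. As noted just before the lemma, the event that $\Ac_R$ queries $\xi$ at $e$ is contained in the one-arm event $\{e\leftrightarrow\partial B\text{ in }\psi\cdot\xi\}$. Hence, writing $A_1^{\psi\cdot\xi}(n)$ for that one-arm event, we have $\delta_{\sfrac{1}{2r}}(\Ac_R,e)\le\Pr_{\sfrac{1}{2r}}\big(A_1^{\psi\cdot\xi}(n)\,\big|\,\psi\big)$. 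Now observe that under $\Pr_r$ for $\psi$ and $\Pr_{\sfrac{1}{2r}}$ for $\xi$, the product $\psi\cdot\xi$ has law $\Pr_{\sfrac12}$; so the quantity on the right is exactly $\Pr_{\sfrac12}\big(A_1(n)\,\big|\,\psi\big)$, a conditional one-arm probability whose unconditional value is the (subpolynomially decaying) one-arm exponent $\alpha_1(n)$.

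The heart of the matter is then a concentration statement: the conditional probability $\Pr_{\sfrac12}(A_1(n)\mid\psi)$ should not exceed, say, $n^{-\beta}$ except on a $\psi$-set of $\Pr_r$-measure at most $n^{-\alpha}$. First I would record the first-moment identity $\E_r\big[\Pr_{\sfrac12}(A_1(n)\mid\psi)\big]=\Pr_{\sfrac12}(A_1(n))=\alpha_1(n)=n^{-\kappa_1+o(1)}$ for the one-arm exponent $\kappa_1>0$, which by Markov already gives $\Pr_r\big(\Pr_{\sfrac12}(A_1(n)\mid\psi)>n^{-\beta}\big)\le n^{\beta-\kappa_1+o(1)}$. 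For $\beta<\kappa_1$ this is $o(1)$ but not yet $o(n^{-\alpha})$, so a second-moment (or higher-moment) refinement is needed. The natural route is to compute $\E_r\big[\Pr_{\sfrac12}(A_1(n)\mid\psi)^2\big]=\Pr_r\otimes\Pr_{\sfrac12}\otimes\Pr_{\sfrac12}\big(A_1^{\psi\cdot\xi}(n)\cap A_1^{\psi\cdot\xi'}(n)\big)$ with $\xi,\xi'$ independent copies, and to show this is at most $\alpha_1(n)^{2-o(1)}$; iterating to the $k$-th moment, $\E_r\big[\Pr_{\sfrac12}(A_1(n)\mid\psi)^k\big]\le\alpha_1(n)^{k-o(1)}$, and then choosing $k$ large and applying Markov to the $k$-th power yields $\Pr_r\big(\Pr_{\sfrac12}(A_1(n)\mid\psi)>n^{-\beta}\big)\le n^{-k(\kappa_1-\beta)+o(1)}$, which beats $n^{-\alpha}$ once $k>\alpha/(\kappa_1-\beta)$.

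The main obstacle is establishing the moment bound $\E_r\big[\Pr_{\sfrac12}(A_1(n)\mid\psi)^k\big]\le\alpha_1(n)^{k-o(1)}$, i.e.\ controlling the correlation between $k$ independent conditional copies of the one-arm event sharing the same $\psi$. Since $\psi$ has density $r>1/2$, which is \emph{supercritical} for the underlying lattice, the set of $\psi$-present edges percolates and is `almost all' of $\Z^2$; the point is that conditioning on $\psi$ and requiring arms in $\psi\cdot\xi^{(1)},\dots,\psi\cdot\xi^{(k)}$ forces $k$ disjoint (in $\xi^{(j)}$) open paths but only one copy of whichever $\psi$-edges they traverse. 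One expects the events $A_1^{\psi\cdot\xi^{(j)}}(n)$ to be close to independent given $\psi$ because a typical $\psi$ looks like the full lattice at all relevant scales; quantitatively one can use a renormalization/coarse-graining argument — on a coarse grid of mesh $\log n$ (say), $\psi$ restricted to each cell is `good' (contains a left-right and top-bottom crossing cluster spanning the cell) except with stretched-exponentially small probability, and on the good event the $\psi\cdot\xi$ one-arm event is sandwiched between one-arm events of a slightly sub- and super-critical i.i.d.\ model on the coarse grid, for which BK/FKG and the standard one-arm estimates give the product bound up to $n^{o(1)}$ losses. Alternatively, and more cheaply, one can avoid moments entirely: exhibit a deterministic `good' event $G_n\subset\{0,1\}^E$ for $\psi$ with $\Pr_r(G_n^c)<n^{-\alpha}$ (via a union bound over $O(n^2)$ coarse cells of a stretched-exponential crossing estimate, using $r>1/2=p_c$) such that on $G_n$ one has the pointwise domination $\Pr_{\sfrac12}(A_1(n)\mid\psi)\le C\,\alpha_1(cn)\le n^{-\beta}$; then the lemma is immediate. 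I would pursue this second, softer route, since it localizes the whole difficulty into a single supercritical crossing statement for $\psi$ plus the classical polynomial decay of the one-arm probability, and the analogous statements for $\Ac_L$, $K_L$ follow by the left-right symmetry already invoked.
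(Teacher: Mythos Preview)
Your reduction to the one-arm event and the Markov framework are fine, but there is a genuine gap: you never invoke the noise sensitivity of full-lattice percolation, and without it neither of your routes can close. The paper's proof goes through Lemma~\ref{lma:rsw}, which is itself a consequence of~\eqref{eq:clueless_crossing} (i.e.\ of the Benjamini--Kalai--Schramm result for $r=1$): with $\Pr_r$-probability tending to $1$, the $\psi$-conditional probability of a dual $3n\times n$ crossing lies in $(c,1-c)$. This \emph{quenched RSW estimate} is exactly what your sketch lacks. Given it, the paper places disjoint dual annuli at scales $\ell_k=3^k$, $k\le N\asymp\log n$; by FKG each conditional dual-circuit probability exceeds $c^4$ with $\Pr_r$-probability at least $1-\gamma$ (with $\gamma$ arbitrarily small for large scales), and a large-deviation count over the $N$ independent annuli shows that at least $N/2$ of these succeed except with $\Pr_r$-probability $\le 2^N\gamma^{N/4}\le n^{-\alpha}$. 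On that event the conditional one-arm probability is at most $(1-c^4)^{N/2}\le n^{-\beta}$.

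Your ``good event'' route cannot substitute for this. Supercritical crossing properties of $\psi$ (at density $r>\tfrac12$) tell you that $\psi$ itself is well connected, but they say nothing about \emph{critical} percolation on the subgraph determined by $\psi$; the claim ``on $G_n$, $\Pr_{1/2}(A_1(n)\mid\psi)\le C\,\alpha_1(cn)$'' is precisely a quenched one-arm bound, and establishing it is tantamount to Lemma~\ref{lma:rsw}. The sandwich you propose fails concretely: a slightly supercritical coarse-grid model has one-arm probability bounded away from zero (so gives no decay as an upper bound), while the subcritical side bounds the wrong direction. The same obstruction blocks the $k$-th moment route, since bounding $\E_r\big[\Pr_{1/2}(A_1(n)\mid\psi)^k\big]$ again requires control of the conditional one-arm behaviour given $\psi$, which you have not secured. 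In short, the missing idea is that the paper bootstraps from noise sensitivity at $r=1$ (via~\eqref{eq:clueless_crossing} and Lemma~\ref{lma:rsw}) to obtain the quenched crossing estimate; purely supercritical information about $\psi$ does not replace it.
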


Before proving Lemma~\ref{lma:revealment}, let us see how Theorem~\ref{thm:NSx} follows via Proposition~\ref{prop:detalg}.

\begin{proof}[\bf Proof of Theorem~\ref{thm:NSx}]
Fix $r>1/2$. Lemma~\ref{lma:revealment} states that there exists $\beta>0$ such that for every $e\in K_R$
$$
\Pr_r\big(\delta_{\sfrac{1}{2r}}(\Ac_R,e)>n^{-\beta}\big)<n^{-4},\quad\text{for sufficiently large }n.
$$
Since the aspect ratios of the sequence of $\Lambda_{n,m(n)}$-rectangles are bounded, the number of edges in each of $K_L$ and $K_R$ is $O(n^2)$. It follows via the union bound that, for some $C<\infty$,
$$
\Pr_r\big(\delta_{\sfrac{1}{2r}}(\Ac_R,K_R)>n^{-\beta}\big)<C\,n^{-2},\quad\text{for sufficiently large }n.
$$
The Borel-Cantelli Lemma gives that $\delta_{\sfrac{1}{2r}}(\Ac_R,K_R)>n^{-\beta}$ for at most finitely many $n$, almost surely. Analogously, $\delta_{\sfrac{1}{2r}}(\Ac_L,K_L)>n^{-\beta}$ for at most finitely many $n$, almost surely. Thus, almost surely,
$$
\big[\delta_{\sfrac{1}{2r}}(\Ac_R,K_R)+\delta_{\sfrac{1}{2r}}(\Ac_L,K_L)\big](\log n)^6\to0\quad\text{as }n\to\infty.
$$
Clearly, $g_n$ depends only on edges in $\Lambda_{n,m(n)}$, and the set of such edges equals $K_L\cup K_R$. Hence, Proposition~\ref{prop:detalg} gives that $(g_n^\psi)_{n\ge1}$ is NS$_{\sfrac{1}{2r}}$ for $\Pr_r$-almost every $\psi\in\{0,1\}^E$.
\end{proof}

It remains to prove Lemma~\ref{lma:revealment}. We will for the proof need a variant of~\eqref{eq:clueless_crossing}. Recall that $\Cc(\Lambda_{n,m})$ denotes the event of a path of present edges crossing the rectangle $[0,n]\times[0,m]$ horizontally. Moreover, let $\Cc^\ast(\Lambda_{n,m})$ denote the event of a present \emph{dual} path crossing $[0,n]\times[0,m]$ horizontally. Here, as usual, an edge in the dual is present if and only if the corresponding edge in the graph is absent.

\begin{lma}\label{lma:rsw}
There exists a constant $c>0$ such that for every $r>1/2$
$$
\Pr_r\Big(\Pr_{\sfrac{1}{2r}}\big(\psi\cdot\xi\in\Cc^\ast(\Lambda_{3n\times n})\big|\,\psi\big)\in(c,1-c)\Big)\to1\quad\text{as }n\to\infty.
$$
\end{lma}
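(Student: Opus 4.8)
The plan is to show that, over the randomness of $\psi$, the quantity
$X_n:=\Pr_{\sfrac{1}{2r}}\big(\psi\cdot\xi\in\Cc^\ast(\Lambda_{3n\times n})\,\big|\,\psi\big)$
concentrates: its $\Pr_r$-mean is bounded away from $0$ and $1$ by a constant that does \emph{not} depend on $r$, while its $\Pr_r$-variance tends to $0$ for each fixed $r$. Chebyshev's inequality then yields the lemma, with $c$ equal to half the lower bound on the mean.

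For the mean, note that $\psi\cdot\xi$ is distributed as $\Pr_{1/2}$ when $\psi\sim\Pr_r$ and $\xi\sim\Pr_{\sfrac{1}{2r}}$, so $\E_r[X_n]=\Pr_{1/2}\big(\Cc^\ast(\Lambda_{3n\times n})\big)$. A horizontal dual crossing of a $3n\times n$ rectangle is a ``hard'' crossing of an aspect-ratio-$3$ rectangle for critical percolation on the dual lattice; hence, by the Russo--Seymour--Welsh theorem (for the lower bound) together with planar duality (the complement being an ``easy'' primal crossing of the same rectangle, for the upper bound), there is a constant $c_0>0$, independent of both $n$ and $r$, with $\Pr_{1/2}\big(\Cc^\ast(\Lambda_{3n\times n})\big)\in(c_0,1-c_0)$ for all $n$. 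We take $c:=c_0/2$.

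For the variance, let $g_n^\ast$ be the $\{0,1\}$-valued indicator of $\Cc^\ast(\Lambda_{3n\times n})$, so that $X_n=\E_{\sfrac{1}{2r}}\big[g_n^\ast(\psi\cdot\xi)\,\big|\,\psi\big]$ is precisely the function $h_{g_n^\ast}$ from Section~\ref{sec:nearcritical}. Under the standard correspondence identifying closed primal edges with open dual edges, and the dual lattice with a relabelled copy of $\Z^2$, the event $\Cc^\ast(\Lambda_{3n\times n})$ becomes a box-crossing event for a rectangle of bounded aspect ratio in critical bond percolation; moreover this correspondence is implemented by a coordinate-wise bit flip followed by a relabelling of coordinates, both of which preserve $\Pr_{1/2}$ and commute with noising, and hence leave noise sensitivity intact. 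Since the argument of~\cite{benkalsch99} applies to any sequence of crossing events of rectangles with bounded aspect ratios, it follows that $(g_n^\ast)_{n\ge1}$ is NS$_{1/2}$. Proposition~\ref{prop:varNSequiv} (or, more concretely,~\eqref{eq:var2step}, whose prefactor reduces to $\big((1-r)/r\big)^{|S|}\le 1$ when $p=1/2$, together with $\sum_S\widehat{g_n^\ast}^{1/2}(S)^2\le1$) then gives $\Var_r(X_n)=\Var_r(h_{g_n^\ast})\to0$ for every $r\in(\tfrac{1}{2},1)$; the case $r=1$ is immediate, since $X_n$ is then the deterministic quantity $\Pr_{1/2}\big(\Cc^\ast(\Lambda_{3n\times n})\big)\in(c,1-c)$.

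Combining the two steps, Chebyshev's inequality gives $\Pr_r\big(|X_n-\E_r[X_n]|\ge c_0/2\big)\le\Var_r(X_n)/(c_0/2)^2\to0$; and since $\E_r[X_n]\in(c_0,1-c_0)$, on the complementary event $X_n\in(c_0/2,\,1-c_0/2)=(c,1-c)$, which is the assertion of the lemma. The only step requiring genuine care is the transfer, via planar duality and the bit-flip invariance of noise sensitivity, of the Benjamini--Kalai--Schramm result from primal to dual crossings; everything else is Russo--Seymour--Welsh input and a second-moment estimate.
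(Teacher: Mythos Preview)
Your proof is correct and follows essentially the same strategy as the paper: both arguments combine the Russo--Seymour--Welsh bound on $\Pr_{1/2}$-crossing probabilities with the vanishing of $\Var_r\big(\E_{\sfrac{1}{2r}}[\,\cdot\,|\psi]\big)$ inherited from the noise sensitivity result of~\cite{benkalsch99}, and conclude via Chebyshev. The only organizational difference is where duality enters: the paper first establishes concentration for the \emph{primal} crossing $\Cc(\Lambda_{n\times 3n})$ (so that~\eqref{eq:clueless_crossing} applies directly) and then uses the deterministic complementarity ``horizontal primal crossing $\Leftrightarrow$ no vertical dual crossing'' together with rotational symmetry to pass to $\Cc^\ast(\Lambda_{3n\times n})$, whereas you invoke duality up front via the bit-flip symmetry of $\Pr_{1/2}$ to transfer noise sensitivity to $g_n^\ast$ itself. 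Both routes are equally short; the paper's avoids the (easy but extra) verification that bit-flip commutes with the noise, while yours avoids the rotational-symmetry step. A minor slip: the function $h_f$ you cite is introduced in the section proving Theorem~\ref{thm:var2step}, not in Section~\ref{sec:nearcritical}.
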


\begin{proof}
By~\eqref{eq:clueless_crossing} and Chebychev's inequality we find that
\be\label{eq:rectangle}
\Pr_r\Big(\big|\Pr_{\sfrac{1}{2r}}\big(\psi\cdot\xi\in\Cc(\Lambda_{n\times3n})\big|\,\psi\big)-\Pr_\half\big(\omega\in\Cc(\Lambda_{n\times3n})\big)\big|>\eps\Big)\to0\quad\text{as }n\to\infty.
\ee
Next, recall that there is a constant $c>0$ such that $\Pr_\half\big(\omega\in\Cc(\Lambda_{n\times3n})\big)\in(c,1-c)$, uniformly in $n$, due to the Russo-Seymour-Welsh argument (see e.g.~\cite[Corollary~3.5]{bolrio06}). Via~\eqref{eq:rectangle} it follows that $\Pr_{\sfrac{1}{2r}}\big(\psi\cdot\xi\in\Cc(\Lambda_{n\times3n})\big|\,\psi\big)\in(c/2,1-c/2)$ with probability approaching $1$ as $n\to\infty$. We further note that for each edge configuration, there is either a present horizontal crossing, or a present vertical dual crossing of the $n\times3n$-rectangle (but not both). In addition, the presence of a vertical dual crossing of an $n\times3n$-rectangle equals $\Cc^\ast(\Lambda_{3n\times n})$ in law. Consequently
$$
\Pr_r\Big(\Pr_{\sfrac{1}{2r}}\big(\psi\cdot\xi\in\Cc^\ast(\Lambda_{3n\times n})\big|\,\psi\big)\in(c/2,1-c/2)\Big)\to1\quad\text{as }n\to\infty,
$$
as required.
\end{proof}

\begin{proof}[\bf Proof of Lemma~\ref{lma:revealment}]
Let $\mathcal{O}_n$ denote the event of a present circuit in the annulus $[-3n,3n]^2\setminus[-n,n]^2$, surrounding the origin. Moreover, let $\mathcal{O}_n^\ast$ denote the corresponding dual event. If there is a present path connecting the origin to the boundary of $[-n/2,n/2]^2$, then it is clear that $\mathcal{O}_\ell^\ast$ cannot occur for any $\ell\le n/6$. From the discussion preceding Lemma~\ref{lma:revealment} we conclude that
$$
\delta_{\sfrac{1}{2r}}(\Ac_R,e)\;\le\;2\,\Pr_{\sfrac{1}{2r}}\bigg(\psi\cdot\xi\in\Big(\bigcup_{\ell\le n/6} \mathcal{O}_\ell^\ast\Big)^c\bigg|\,\psi\bigg).
$$
Let $\ell_k:=3^k$, and $N:=(\log n-\log6)/\log3$. Thus, it will suffice to show that for some $\beta>0$
\be\label{eq:suff}
\Pr_r\Bigg(\Pr_{\sfrac{1}{2r}}\Big(\psi\cdot\xi\in\bigcap_{k\le m}(\mathcal{O}_{\ell_k}^\ast)^c\Big|\,\psi\Big)>n^{-\beta}\Bigg)<n^{-\alpha},\quad\text{for large enough }n.
\ee
Note that for this choice of $\ell_k$, the events $\mathcal{O}_{\ell_1},\mathcal{O}_{\ell_2},\ldots$ are mutually independent.

Given $\alpha>0$, pick $\gamma>0$ such that $2^N\gamma^{N/4}\le n^{-\alpha}$ for all $n$. In the usual way, it follows from Lemma~\ref{lma:rsw} and the FKG-inequality that $\Pr_r\big(\Pr_{\sfrac{1}{2r}}(\psi\cdot\xi\in \mathcal{O}_\ell^\ast|\,\psi)>c^4\big)>1-\gamma$ for all large $n$. Now, if $\Pr_{\sfrac{1}{2r}}(\psi\cdot\xi\in \mathcal{O}_{\ell_k}^\ast|\,\psi)>c^4$ holds for at least $N/2$ of the $k$'s, then 
$$
\Pr_{\sfrac{1}{2r}}\Big(\psi\cdot\xi\in\bigcap_{k\le N}(\mathcal{O}_{\ell_k}^\ast)^c\Big|\,\psi\Big)\;=\;\prod_{k\le N}\Pr_{\sfrac{1}{2r}}\big(\psi\cdot\xi\in (\mathcal{O}_{\ell_k}^\ast)^c\big|\,\psi\big)\;\le\;(1-c^4)^{N/2}\;\le\; n^{-\beta},
$$
for some $\beta>0$. On the contrary, if $\Pr_{\sfrac{1}{2r}}(\psi\cdot\xi\in \mathcal{O}_{\ell_k}^\ast|\,\psi)>c^4$ fails for at least $N/2$ of the $k$'s, then at least $N/4$ of these $k$'s are at least $N/4$. However, when $n$ is large we have
$$
\Pr_r\Big(\#\big\{k\ge N/4:\Pr_{\sfrac{1}{2r}}(\psi\cdot\xi\in \mathcal{O}_{\ell_k}^\ast|\,\psi)<c^4\big\}\ge N/4\Big)\;\le\; 2^N\gamma^{N/4}\;\le\; n^{-\alpha},
$$
by the choice of $\gamma$. This proves~\eqref{eq:suff}.
\end{proof}

\begin{acknow}
Research in part carried out during the scientific programme \emph{Random Spatial Processes} at MSRI, and in part at a visit to IMPA supported by CNPq.
The author would like to express his gratitude to these two institutions, as well as to Simon Griffiths, Rob Morris and Jeff Steif, for helpful discussions and comments on an earlier version of the manuscript.
\end{acknow}

\bibliographystyle{plain}
\bibliography{bib-percolation}

\end{document}